\numberwithin{equation}{section}
\newtheorem{theo}{Theorem}
\theoremstyle{plain}
\newtheorem{prp}[subsection]{Proposition}
\newtheorem{lem}[subsection]{Lemma}
\theoremstyle{definition}
\newtheorem{defn}[subsection]{Definition}
\newtheorem{rems}[subsection]{Remarks}
\newcommand{\bsm}{\begin{smallmatrix}}
\newcommand{\esm}{\end{smallmatrix}}
\newcommand{\smp}[1]{\left( \bsm #1 \esm \right)}
\newcommand{\ie}{\emph{i.e.}\@\xspace}
\newcommand{\df}{\colon}
\newcommand{\ra}{\rightarrow}
\newcommand{\xra}[1]{\xrightarrow{#1}}
\newcommand{\xla}[1]{\xleftarrow{#1}}
\newcommand{\id}{1\kern -.35em 1}
\newcommand{\dA}{\mathsf{A}}
\newcommand{\dD}{\mathsf{D}}
\newcommand{\dE}{\mathsf{E}}
\newcommand{\Xx}{\mathbb{X}}
\newcommand{\alp}{\alpha}
\newcommand{\bet}{\beta}
\newcommand{\del}{\delta}
\newcommand{\lam}{\lambda}
\newcommand{\iot}{\iota}
\newcommand{\vph}{\varphi}
\newcommand{\sig}{\sigma}
\newcommand{\Lam}{\Lambda}
\newcommand{\Hom}{\operatorname{Hom}}
\newcommand{\sHom}{\operatorname{\underline{Hom}}}
\newcommand{\Ext}{\operatorname{Ext}}
\newcommand{\End}{\operatorname{End}}
\newcommand{\Ker}{\operatorname{Ker}}
\newcommand{\op}{{\operatorname{op}}}
\newcommand{\add}{\operatorname{add}}
\newcommand{\ka}{k}                 
\newcommand{\Zet}{\mathbb{Z}}
\newcommand{\Sig}{\Sigma}          
\newcommand{\Ome}{\Omega}          
\newcommand{\Se}{\mathbb{S}}       
\newcommand{\rMd}[1]{\operatorname{Mod}#1}             
\newcommand{\rmd}[1]{\operatorname{mod}#1}             
\newcommand{\smd}[1]{\operatorname{\underline{mod}}#1} 
\newcommand{\rin}[1]{\operatorname{inj}#1}             
\newcommand{\rpr}[1]{\operatorname{proj}#1}
\newcommand{\Ab}{\operatorname{Ab}} 
\newcommand{\Fo}{\mathcal{F}}       
\newcommand{\cC}{\mathcal{C}}
\newcommand{\Cac}{\mathcal{C}_\text{ac}} 
\newcommand{\Hac}{\mathcal{H}_\text{ac}} 
\newcommand{\cE}{\mathcal{E}}       
\newcommand{\sE}{{\underline{\cE}}}
\newcommand{\cP}{\mathcal{P}}       
\newcommand{\Tr}{\mathcal{T}}
\newcommand{\cc}{\mathcal{C}}
\newcommand{\foa}{\pentagon}             
\newcommand{\bul}{\bullet}
\newcommand{\nSig}{n\text{-}\Sig}
\title{$n$-Angulated Categories}
\author{Christof Geiß}
\address{Instituto de Matemáticas, Universidad Nacional Autonoma de México,
Ciudad Universitaria,
04510 México D.F., Mexico}
\email{christof@math.unam.mx}
\urladdr{www.matem.unam.mx/christof/}
\author{Bernhard Keller}
\address{UFR de Mathématiques, Université Denis Diderot - Paris 7,
Institut de Math\'ematiques, UMR 7586 du CNRS,
B\^atiment Chevaleret, 175 rue du Chevaleret, 75013 Paris, FRANCE}
\email{keller@math.jussieu.fr}
\urladdr{www.math.jussieu.fr/~keller/}
\author{Steffen Oppermann}
\address{Institutt for matematiske fag, NTNU, 7491 Trondheim, Norway}
\email{steffen.oppermann@math.ntnu.no}
\urladdr{www.math.ntnu.no/~opperman/}
\date{May 25, 2011}
\begin{document}

\begin{abstract}
We define $n$-angulated categories by modifying the axioms of triangulated
categories in a natural way. We show that Heller's parametrization of
pre-triangulations extends to pre-$n$-angulations. We obtain a large class
of examples of $n$-angulated categories by considering $(n-2)$-cluster tilting
subcategories of triangulated categories which are stable under the $(n-2)$nd
power of the suspension functor. As an application, we show how
$n$-angulated Calabi-Yau categories yield triangulated
Calabi-Yau categories of higher Calabi-Yau dimension. Finally,
we sketch a link to algebraic geometry and string theory.
\end{abstract}

\maketitle
\section*{Introduction}

\subsection{Context}
Triangulated categories were invented at the end of the 1950s by
Grothen\-dieck-Verdier~\cite{Verdier77} and, independently, Puppe~\cite{Puppe62a}.
Their aim was to axiomatize
the properties of derived categories respectively of stable homotopy
categories. In the case of the derived category, the triangles are the
`shadows' of the $3$-term exact sequences of complexes. Longer exact
sequences of complexes are splicings of $3$-term exact sequences and
thus they also have their shadows in the derived category, for example
in the form of the higher octahedra described in Remark~1.1.14 of
\cite{BeilinsonBernsteinDeligne82}. Cluster tilting theory (as
developed in \cite{BuanMarshReinekeReitenTodorov06, GeissLeclercSchroeer05, IyamaYoshino08} and many other articles)
and in particular Iyama's higher Auslander-Reiten-theory (see \cite{Iyama08,Iyama07a})
have lead to the surprising discovery that
there is a large class of categories which are naturally inhabited by
shadows of $n$-term exact sequences without being home to shadows of
$3$-term exact sequences. In this paper, our main aims are to
\begin{itemize}
\item axiomatize this remarkable class of categories by
introducing the new notion of {\em $n$-angulated category} and
\item construct large classes of examples using cluster tilting theory.
\end{itemize}
The main thrust of this paper is thus foundational. However, as we
will see, this apparently dry subject matter is linked to some
exciting developments in algebraic geometry
and string theory \cite{KontsevichSoibelman08, KontsevichSoibelman10, CecottiNeitzkeVafa10,GaiottoMooreNeitzke10}.

\subsection{Contents}
In creating our axiomatic framework
we found that mostly, the axioms of triangulated categories generalize
well but that special care has to be taken with the notion of isomorphism
of $n$-angles and with the octahedral axiom, cf.\ Section~\ref{s:axioms}.

In Section~\ref{s:parametrization}, we pursue the foundational analogy with the
triangulated case by generalizing a result of Heller \cite{Heller68a}: We 
parametrize the set of equivalence classes of pre-$n$-angulations 
(a pre-$n$-angulation need not satisfy the generalized octahedral axiom) on a 
given additive category $\mathcal{F}$ with a given higher suspension functor 
$\Sigma$.
Namely, we show that if it is not empty, this set has a simply transitive
action by the automorphism group of the $n$th suspension
in the stable category associated with the Frobenius category of
finitely presented $\mathcal{F}$-modules.

In Section~\ref{s:standard-construction}, we show how to
construct $n$-angulated categories inside triangulated categories.
Namely, if $\mathcal{T}$ is a triangulated category with
suspension functor $\Sigma$ and
$\mathcal{S}$ is an $(n-2)$-cluster tilting subcategory
in the sense of Iyama \cite{Iyama07a} which is stable under
$\Sigma^{n-2}$, then $\mathcal{S}$ naturally becomes an
$n$-angulated category with higher suspension functor $\Sigma^{n-2}$.
This yields a large supply of interesting $n$-angulated categories
and is one of the main results of this paper.

In Section~\ref{s:calabi-yau-properties}, we show that if
$\mathcal{F}$ is an $n$-angulated category which is Calabi-Yau
of Calabi-Yau dimension $d$, then the stable category of
finitely presented $\mathcal{F}$-modules is (triangulated and)
Calabi-Yau of Calabi-Yau dimension $nd-1$. In particular,
we can construct $3$-Calabi-Yau categories from $4$-angulated
categories which are $1$-Calabi-Yau. This generalizes
the construction in example~8.3.3 of \cite{Kell05}.
It was one of our original motivations for studying
$n$-angulated categories.

We conclude by presenting several classes of examples
in Section~\ref{s:examples} and by sketching a link
to algebraic geometry and string theory in
\ref{ss:links-alg-geom-string-theory}.

\section*{Acknowledgments}
C.~G. and B.~K. thank Idun Reiten and Claus Michael Ringel
for organizing the 2005 Oberwolfach meeting on representation theory
of finite-dimensional algebras, where the results of this paper were
first presented in rudimentary form.

C.~G. acknowledges partial support from PAPIIT grant IN103507-2
and CONACYT grant 81948.

\section{Axioms}
\label{s:axioms}
\begin{defn} Let $\Fo$ be an additive category with an automorphism
$\Sig$, and $n$ an integer greater or equal than three. A sequence of morphisms
of $\Fo$
\[
X_\bul:=
(X_1\xra{\alp_1} X_2\xra{\alp_2}\cdots\xra{\alp_{n-1}} X_n\xra{\alp_n}\Sig X_1)
\]
is an $\nSig$-\emph{sequence}. Its \emph{left rotation} is the
$\nSig$-sequence
\[
(X_2 \xra{\alp_2} X_3 \xra{\alp_3}\cdots\xra{\alp_n}
\Sig X_1\xra{(-1)^n\Sig\alp_1} \Sig X_2).
\]
The $\nSig$-sequences of the form
$(TX)_\bul:=(X\xra{\id_X} X \xra{} 0 \xra{}\cdots  \xra{}0 \xra{} \Sig X)$
for $X\in\Fo$, and their rotations, are called \emph{trivial}.

An $\nSig$-sequence $X_\bul$ is \emph{exact} if the induced sequence
\[
\Fo(-,X_\bul)\df\quad\cdots\ra\Fo(-,X_1)\ra\Fo(-,X_2)\ra\cdots\ra\Fo(-,X_n)\ra
\Fo(-,\Sig X_1)\ra\cdots
\]
of representable functors $\Fo^\op\ra\Ab$ is exact. In particular, the trivial
$\nSig$-sequences are exact.

A \emph{morphism} of $\nSig$-sequences is given by
a sequence of morphisms $\vph=(\vph_1,\vph_2,\ldots,\vph_n)$ such
that the following diagram commutes:
\[
\def\objectstyle{\scriptstyle}
\xymatrix{
X_\bul\ar[d]^{\vph_\bul}& X_1\ar[r]^{\alp_1}\ar[d]^{\vph_1} & X_2\ar[r]^{\alp_2}\ar[d]^{\vph_2} &
X_3\ar@{.}[r]\ar[d]^{\vph_3} & X_n\ar[r]^{\alp_n}\ar[d]^{\vph_n} &\Sig X_1\ar[d]^{\Sig\vph_1}  \\
Y_\bul&Y_1\ar[r]^{\bet_1} &Y_2\ar[r]^{\bet_2} & Y_3\ar@{.}[r] &Y_n\ar[r]^{\bet_n} &\Sig Y_n
}
\]
In this situation we call $\vph$ a {\em weak isomorphism} if for some
$1\leq i\leq n$ both $\vph_i$ and $\vph_{i+1}$ (with $\vph_{n+1}:=\Sig\vph_1$)
are isomorphisms. Slightly abusing terminology we will say that
two $\nSig$-sequences are {\em weakly isomorphic} if they are
linked by a finite zigzag of weak isomorphisms.

We call a collection $\foa$ of $\nSig$-sequences a
{\em (pre-) $n$-angulation of  $(\Fo,\Sig)$}
and its elements $n$-\emph{angles}
if $\foa$ fulfills the following axioms:
\begin{itemize}
\item[(F1)]
  \begin{itemize}
  \item[(a)]
    $\foa$ is closed under direct sums and under taking direct summands.
  \item[(b)]
For all $X\in\Fo$, the trivial $\nSig$ sequence $(TX)_\bul$ belongs to $\foa$.
  \item[(c)]
    For each morphism $\alp_1\df X_1\ra X_2$ in $\Fo$, there exists an
    $n$-angle whose first morphism is $\alp_1$.
  \end{itemize}
\item[(F2)]
An $\nSig$-sequence $X_\bul$ belongs to $\foa$
if and only if its left rotation
\[
(X_2 \xra{\alp_2} X_3 \xra{\alp_3}\cdots\xra{\alp_n}
\Sig X_1\xra{(-1)^n\Sig\alp_1} \Sig X_2)
\]
belongs to $\foa$.
\item[(F3)] Each commutative diagram
\[
\def\objectstyle{\scriptstyle}
\xymatrix{
X_1\ar[r]^{\alp_1}\ar[d]^{\vph_1} & X_2\ar[r]^{\alp_2}\ar[d]^{\vph_2} &
X_3\ar@{.}[r]\ar@{.>}[d]^{\vph_3} & X_n\ar[r]^{\alp_n}\ar@{.>}[d]^{\vph_n}
&\Sig X_1\ar[d]^{\Sig\vph_1}  \\
Y_1\ar[r]^{\bet_1} &Y_2\ar[r]^{\bet_2} & Y_3\ar@{.}[r] &Y_n\ar[r]^{\bet_n} &\Sig Y_1
}
\]
with rows in $\foa$ can be completed to a morphism of $\nSig$-sequences.

\end{itemize}
If $\foa$ moreover fulfills the following axiom, it is called an
$n$-\emph{angulation} of $(\Fo,\Sig)$:
\begin{itemize}
\item[(F4)]
In the situation of (F3) the morphisms $\vph_3,\vph_4,\ldots,\vph_n$ can be
chosen such that the cone $C(\vph_\bul)\df$
\[
\xymatrix{
X_2\oplus Y_1\quad
\ar[r]^{\left(\bsm -\alp_2& 0\\\phantom{-}\vph_2&\bet_1\esm\right)}&
\quad X_3\oplus Y_2\quad
\ar[r]^{\left(\bsm -\alp_3& 0\\\phantom{-}\vph_3&\bet_2\esm\right)}&
\quad \cdots\quad
\ar[r]^{\left(\bsm -\alp_n& 0\\\phantom{-}\vph_n&\bet_{n-1}\esm\right)}&
\qquad\Sig X_1\oplus Y_n\quad
\ar[r]^{\left(\bsm -\Sig\alp_1& 0\\\phantom{-}\Sig\vph_1& \bet_n\esm\right)}&
\quad \Sig X_2\oplus \Sig Y_1
}
\]
belongs to $\foa$.
\end{itemize}
\end{defn}

\begin{rems}
(a) For (pre-) triangulated categories it is possible to demand in
axiom (TR2), the ``model'' for our (F2) only one direction, see
for example~\cite[I.1.3]{happ88}.
We need here however both directions. One reason for this is that
for $n\geq 4$ our Axiom~(F3)
is weaker than its ``model'' for triangulated categories. For example, if
$\vph_1$ and $\vph_2$ are isomorphisms it does {\em not} follow that
$\vph_3,\ldots,\vph_n$ are isomorphisms.

(b) Our axiom (F4) is inspired by Neeman's version of
``octahedral axiom (TR4)'', see for example~\cite[1.3, 1.8]{neeman01}.

(c) If $(\Fo,\Sig,\foa)$ is (pre-) $n$-angulated, the opposite category
$(\Fo^\op,\Sig^{-1})$ is (pre-) $n$-angulated with $n$-angles
\[
\Sig^{-1}X_n\xla{(-1)^n\Sig^{-1}\alp_n} X_1\xla{\alp_1} X_2\xla{}\cdots
\xla{\alp_{n-1}}X_n
\]
corresponding to the $n$-angles in $\foa$.

(d) In examples, $n$-angulated categories come frequently with a
self-equivalence $\Sig$. However, in analogy with~\cite[Sec.~2]{kevo88}
we may assume without loss of generality that $\Sig$ is an automorphism.
This has the advantage of a less heavy notation.
\end{rems}

\subsection{Periodic Complexes} \label{ssec:percpx}
Consider a complex $M_\bul=(M_k,\del_k)_{k\in\Zet}$ in $\rMd{\Fo}$,
the abelian category of functors $\Fo^\op\ra\Ab$.
For later use, we fix factorizations
\[
\xymatrix{
M_k\ar[rr]^{\del_k}\ar@{->>}[rd]_{\del'_k} && M_{k+1}\\
& K_k\ar@{>->}[ru]_{\del''_k}
}
\]
with $\del'_k$ an epimorphism and $\del''_k$ a monomorphism
for all $k\in\Zet$. Recall that $M_k$ is contractible if there exists a
contraction, \ie a family of morphisms $\eta_k\df M_{k+1} \ra M_k$ such that
$\del_{k-1}\eta_{k-1}+\eta_k\del_k=\id_{M_k}$ for all $k$. Equivalently,
$M_\bul$ is exact, and there exist sections $\eta'_k$ for $\del'_k$  for all $k$.

The automorphism $\Sig$ of $\Fo$ induces an automorphism of $\rMd{\Fo}$
defined by $M\mapsto M\circ\Sig^{-1}$ which we denote also by $\Sig$.
We say that $M_\bul$ is $\nSig$-\emph{periodic} if
$M_{k+n+1}=\Sig M_k$ and $\del_{k+n+1}=\Sig\del_k$ for all $k\in\Zet$. A
$\nSig$-periodic complex $M_\bul$ is \emph{trivial} if it is
exact and there exists $l\in\{1,\ldots,n\}$ such that $M_k=0$ unless
$k\equiv l\pmod{n}$ or $k\equiv l+1\pmod{n}$.

Note that if an $\nSig$-periodic complex $M_\bul$ is contractible,
then it admits an $\nSig$-periodic contraction. In particular, it
is a finite direct sum of $\nSig$-periodic trivial complexes.
Indeed, in the above factorization, we may assume that
$\del'_{k+n+1}=\Sig\del'_k$ and $\del''_{k+n+1}=\Sig\del''_k$ for all $k$.
Since $M_\bul$ is contractible, we find sections $\eta'_k$ to $\del'_k$
for $k=1,2,\ldots, n$. Then, $\Sig^l\eta'_k$ is a section to $\del'_{l(n+1)+k}$
for all $l\in\Zet$ and $1\leq k\leq n$. This yields an $\nSig$-periodic
contraction.

\begin{lem} \label{lem:wiso}
Let $(\Fo,\Sig,\foa)$ be a pre-$n$-angulated category. If $X_\bul$ and $Y_\bul$
are two weakly isomorphic exact $\nSig$-sequences, then $X_\bul$ belongs to
$\foa$ if and only if $Y_\bul$ belongs to $\foa$.
\end{lem}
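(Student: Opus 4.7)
The plan is to reduce, via standard moves, to the case of a single morphism $\vph\colon X_\bul \to Y_\bul$ of exact $\nSig$-sequences whose first two components $\vph_1, \vph_2$ are isomorphisms, and to show that then $X_\bul \in \foa$ implies $Y_\bul \in \foa$ (the converse will follow by running the same argument in reverse). By induction on the length of the zigzag realising the weak isomorphism one reduces to a single morphism; by applying axiom (F2) a suitable number of times the pair of isomorphisms can be brought to positions $1$ and $2$; and by transporting $X_\bul$ across $\vph_1, \vph_2$ and $\Sig\vph_1$ one replaces it with an $\nSig$-sequence fully isomorphic to it---hence in $\foa$ iff the original is, by (F1)(a)---for which $\vph_1 = \id_{Y_1}$, $\vph_2 = \id_{Y_2}$, and, by commutativity, $\alp_1 = \bet_1$.

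Suppose now $X_\bul \in \foa$. By (F1)(c) I pick an $n$-angle $Z_\bul \in \foa$ whose first map is the common morphism $\alp_1 = \bet_1$, and by (F3), applied with identity components in positions $1,2$ in both directions between $X_\bul$ and $Z_\bul$ (both of which lie in $\foa$), I obtain morphisms $\psi\colon X_\bul \to Z_\bul$ and $\chi\colon Z_\bul \to X_\bul$ that are the identity at the first two positions. This reduces the problem to comparing the two exact $\nSig$-sequences $Y_\bul$ and $Z_\bul$, which share their first two objects and first map. Using the exactness of both, I would iteratively construct morphisms $\mu\colon Y_\bul \to Z_\bul$ and $\nu\colon Z_\bul \to Y_\bul$ whose first two components are identities, by lifting one square at a time in the long exact sequence of representable functors in $\rMd{\Fo}$: the obstruction to each lift is a composition of two successive maps in an exact sequence, which vanishes, and the representables are projective in $\rMd{\Fo}$, so each lift exists.

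The final and main step is to deduce $Y_\bul \in \foa$ from $Z_\bul \in \foa$ and the data $\mu, \nu$. For $n = 3$ the 5-lemma forces $\mu$ to be componentwise an isomorphism, giving $Y_\bul \cong Z_\bul$ directly; but for $n \geq 4$ this fails, as pointed out in the remarks above. Instead one should analyse the mapping cone (formed by the formula in axiom (F4)) of a suitable combination of $\mu$ and $\nu$ as an $\nSig$-periodic complex in $\rMd{\Fo}$: the exactness of both sides and the fact that it is identity at the first two positions together yield that this complex is contractible, so by Section~\ref{ssec:percpx} it decomposes as a finite direct sum of $\nSig$-periodic trivial complexes. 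Lifting this decomposition through the Yoneda embedding identifies $Y_\bul$ with $Z_\bul$ up to direct summands that are rotations of trivial $n$-angles, and then (F1)(a), (F1)(b) and (F2) give $Y_\bul \in \foa$. The main obstacle is exactly this contractibility argument, which must use essentially that \emph{both} $X_\bul$ and $Y_\bul$ are exact, not merely that they are weakly isomorphic.
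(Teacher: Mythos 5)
Your overall skeleton is the paper's: the decisive facts are that the cone of a weak isomorphism between exact $\nSig$-sequences is periodically contractible, hence (by \ref{ssec:percpx}) a finite direct sum of trivial $\nSig$-periodic complexes, and that membership in $\foa$ is then transferred using (F1)(a),(b) and (F2). However, two of your steps do not hold up as stated. The first is the construction of $\mu\df Y_\bul\ra Z_\bul$ extending $(\id,\id)$ ``by lifting one square at a time''. Projectivity of the representables only lets you prolong a partial chain map $\Fo(-,Y_\bul)\ra\Fo(-,Z_\bul)$ \emph{against} the direction of the differentials; producing $\mu_3,\dots,\mu_n$ requires extending maps off the subobject $\Ima\Fo(-,\bet_i)\subseteq\Fo(-,Y_{i+1})$, i.e.\ injectivity of the representables, and even granting that (via Proposition~\ref{axi:prp}(b)) you must still close the map up periodically: after producing $\mu_3,\dots,\mu_{n+1}$ there is no reason that $\mu_{n+1}=\Sig\mu_1$. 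Section~\ref{s:parametrization} shows this obstruction is genuinely nonzero: two exact $\nSig$-sequences with the same first morphism admit such a connecting morphism only when their invariants $\del$ agree, and distinct pre-$n$-angulations provide exact sequences for which they do not. In your situation the morphism you need does exist, but only as the composite $\vph\circ\chi\df Z_\bul\ra X_\bul\ra Y_\bul$ of maps you already have --- which also shows the detour through $Z_\bul$ buys nothing: you are back to transferring $\foa$-membership across a single weak isomorphism, namely the original problem with $X_\bul$ renamed.

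The second problem is the final inference, that contractibility of the cone ``identifies $Y_\bul$ with $Z_\bul$ up to direct summands that are rotations of trivial $n$-angles''. This is precisely the point that needs an argument, and the decomposition of the cone alone does not supply it. The missing device is the degreewise split short exact sequence of $\nSig$-periodic complexes
\[
0\ra\Fo(-,X_\bul)\ra\Fo(-,Y_\bul)\oplus\Fo(-,C(\id_X))\ra\Fo(-,C(\vph_\bul))\ra 0 ;
\]
because the right-hand term is periodically contractible, this splits $\nSig$-periodically, Yoneda lifts the resulting splitting to an isomorphism $Y_\bul\oplus C(\id_X)\cong X_\bul\oplus C(\vph_\bul)$ of $\nSig$-sequences, and only then do (F1)(a),(b) and (F2) transfer membership (in both directions, using the analogous sequence for the other one). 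This cylinder argument is the engine of the paper's proof and is absent from your sketch; with it in place, your preliminary reductions (rotating via (F2), normalising $\vph_1,\vph_2$ to identities, introducing $Z_\bul$) become unnecessary.
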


\begin{proof}
(1) Let $X_\bul$ be an exact $\nSig$-sequence. Then it is easy to see
that the cone over the identity $C(\id_X)$ is a finite direct sum of trivial
complexes. So $C(\id_X)$ belongs to $\foa$.

(2) Let $\vph_\bul\df X_\bul\ra Y_\bul$ be a weak isomorphism between exact
$\nSig$-sequences. Then the cone $C(\vph_\bul)$ admits a periodic
contraction.
Equivalently, the $\nSig$-periodic complex $\Fo(-,C(\vph_\bul))$
is a direct sum of trivial $\nSig$-periodic complexes in
$\mod{\Fo}$.

Indeed, the induced morphism of complexes
$\Fo(-,\vph_\bul)\df\Fo(-,X_\bul)\ra\Fo(-,Y_\bul)$ is a homotopy
equivalence since for appropriate truncations those complexes can be
seen as the projective resolution of isomorphic modules with the isomorphism
induced by $\vph_\bul$.  Our claim now follows from~\ref{ssec:percpx} since
the cone over a homotopy equivalence is contractible.

(3) For a weak isomorphism $\vph_\bul\df X_\bul\ra Y_\bul$, consider the
following sequence of $\nSig$-periodic, exact complexes:
\[
0\ra \Fo(-,X_\bul)\ra \Fo(-,Y)\oplus \Fo(-,C(\id_X))\xra{p} 
\Fo(-,C(\vph_\bul))\ra 0,
\]
it is exact for the usual degree-wise split exact structure on the category
of complexes. From~(2), it follows that $p$ admits an $\nSig$-periodic
section. Now, it follows from (F1) and (1) that in case $Y_\bul\in\foa$
also $X_\bul\in\foa$. A similar construction shows that in case
 $X_\bul\in\foa$ implies $Y_\bul\in\foa$.
\end{proof}

\begin{prp} \label{axi:prp}
Let $\Fo=(\Fo,\Sig,\foa)$ be a pre-$n$-angulated category.
Then the following hold:
\begin{itemize}
\item[(a)]
All $n$-angles are exact.
\item[(b)]
$\rmd{\Fo}$, the category of finitely presented functors
$\Fo^{\text{op}}\ra\Ab$ is an abelian Frobenius category. Moreover, if
$\Fo$ has split idempotents, the map $X\mapsto \Fo(-,X)$ induces an
equivalence from $\Fo$ to $\rin{\Fo}$, the full subcategory of $\rmd{\Fo}$ which
consists of the injective objects.
\item[(c)]
If $\foa'\subset\foa$ is an other pre-$n$-angulation of $(\Fo,\Sig)$, then
$\foa'=\foa$.
\end{itemize}
\end{prp}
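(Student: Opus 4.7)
The plan is to prove (a), (b), (c) in order, with each part feeding into the next.

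\emph{For (a)}, by the rotational symmetry of (F2) it suffices to show that $\alp_2\alp_1=0$ and that $\Fo(-,X_\bul)$ is exact at $X_2$; rotation then propagates both to all positions. The vanishing $\alp_2\alp_1=0$ comes from comparing $X_\bul$ with the trivial $n$-angle $T(X_1)_\bul$ by applying (F3) with $\vph_1=\id_{X_1}$ and $\vph_2=\alp_1$: the forced $\vph_3\colon 0\to X_3$ is zero, and the square at positions two--three reads $\alp_2\alp_1=0$. For exactness at $X_2$, given $f\colon W\to X_2$ with $\alp_2 f=0$, left-rotate both $T(W)_\bul$ and $X_\bul$ (this uses the bidirectionality of (F2) noted in Remark~(a)) and apply (F3) with $\vph_1=f$ and $\vph_2=0$: the completing $\vph_n\colon\Sig W\to\Sig X_1$ satisfies $\Sig\alp_1\circ\vph_n=\Sig f$ by the last square, so $g:=\Sig^{-1}\vph_n$ gives $\alp_1 g=f$.

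\emph{For (b)}, the category $\Fo$ has weak kernels: for any $\alp\colon X\to Y$, choose via (F1)(c) an $n$-angle with first morphism $\alp$ and right-rotate by (F2) to place $\alp$ at position two; the preceding morphism is a weak kernel by part (a). Hence $\rmd\Fo$ is abelian. Representables are projective by Yoneda, and a presentation $\Fo(-,Y)\xra{f_*}\Fo(-,X)\to M\to 0$ together with an $n$-angle extending $f$ embeds $M$ into $\Fo(-,Z_3)$ via (a). The injectivity of representables is established by showing $\Ext^1(L,\Fo(-,X))=0$ for all $L$; after resolving $L$ by representables and applying Yoneda, this becomes the exactness of the contravariant sequence $\Fo(Y_0,X)\to\Fo(Y_1,X)\to\Fo(Y_2,X)$ coming from an $n$-angle, which follows from (a) applied to the opposite pre-$n$-angulation (Remark~(c)). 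Thus projectives, injectives and representables all coincide in $\rmd\Fo$, so the category is Frobenius. When idempotents split in $\Fo$, Yoneda transports splitting to the endomorphism rings of representables, forcing summands of representables to be representable; this gives the equivalence $X\mapsto\Fo(-,X)$ from $\Fo$ onto $\rin\Fo$.

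\emph{For (c)}, given $X_\bul\in\foa$ with first morphism $\alp_1$, axiom (F1)(c) applied to $\foa'$ provides some $X'_\bul\in\foa'\subseteq\foa$ with the same first morphism. Axiom (F3) in $\foa$ applied with $\vph_1=\id_{X_1}$ and $\vph_2=\id_{X_2}$ (the first square is trivially commutative) yields a morphism $\vph_\bul\colon X_\bul\to X'_\bul$ whose first two components are isomorphisms, hence a weak isomorphism. Both sequences are exact by (a), and Lemma~\ref{lem:wiso} applied to the pre-$n$-angulation $\foa'$ then forces $X_\bul\in\foa'$; combined with $\foa'\subseteq\foa$, this gives $\foa=\foa'$.

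The main technical hurdle I anticipate is the injectivity of representables in (b). It depends on assembling a doubly-infinite long exact sequence of representables from iterated rotation of $n$-angles and on translating the contravariant exactness delivered by Remark~(c) into the required vanishing of $\Ext^1$, while keeping accurate track of the role of $\Sig^{-1}$ and the sign conventions in the rotated $n$-angles.
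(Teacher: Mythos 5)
Your proposal is correct and follows essentially the same route as the paper: the paper handles (a) by citing the standard pre-triangulated argument (which you write out explicitly, including the bidirectional use of (F2)), proves injectivity of representables in (b) by computing $\Ext^i(M,\Fo(-,T))$ along the periodic projective resolution coming from a rotated $n$-angle together with part (a) applied to $\Fo^\op$, and proves (c) exactly as you do via (F1c) for $\foa'$, (F3), and Lemma~\ref{lem:wiso}. The only cosmetic difference is that you stop at $\Ext^1$ where the paper records the vanishing of all higher $\Ext^i$, which changes nothing.
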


\begin{proof}
(a) This is almost verbatim the same argument as the corresponding statement
for pre-triangulated categories,
see for example~\cite[1.1.3 and 1.1.10]{neeman01}.

(b) $\rmd{\Fo}$ has clearly cokernels, and it has kernels since by (a)
$\Fo$ has weak kernels~\cite{Cartan53}.
In order to conclude 
that each projective is also injective we show 
$\Ext^i_\Fo(M, \Fo(-,T))=0$ for each $M\in\rmd{\Fo}$ and $T\in\Fo$. 
Indeed, for a projective  presentation
\[
\Fo(-,X_{n-1})\xra{\Fo(-,\alp_{n-1})}\Fo(-,X_n)\ra M\ra 0
\]
$\alp_{n-1}$ can be embedded in an $n$-angle $X_\bul$ by (F1c) and (F2).
By part~(a) of the proposition we obtain a (periodic)
projective resolution $P^\bullet$ of $M$. 
By the Yoneda lemma and part~(a) of the proposition applied to
$\Fo^\op$ we conclude that $H^i(P^\bullet,\Fo(-,T))=0$ for all $i\geq 1$.

(c) Let $X_\bul$ be in $\foa$.
By~(F1c) there exists a 4-angle
$X_1\xra{\alp}X_2\xra{\alp'_2}X'_2\xra{\alp'_3}\cdots\xra{\alp'_n}\Sig X_1$ in
$\foa'$.
Since $\foa'\subset\foa$, the pair $(\id_{X_1},\id_{X_2})$ can be completed
by~(F3) to a morphism of $n$-angles. But this is a weak isomorphism, 
so the first $n$-angle belongs already to $\foa'$ by Lemma~\ref{lem:wiso}.
\end{proof}

\section{The class of $n$-angulations}
\label{s:parametrization}
In this section, we assume that $\Fo$ has split idempotents.
Closely following~\cite[\S 16]{Heller68a} we study the possible
pre-$n$-angulations for $(\Fo,\Sig)$.

\subsection{Injective Resolutions and Triangle Functors}
Let $(\Fo,\Sig,\foa)$ be a pre-$n$-angulated category.
Thus, by Proposition~\ref{axi:prp} $\rmd{\Fo}$ is a Frobenius category
and we may identify $\Fo$ with the subcategory $\rin{\Fo}$ of injectives
in $\rmd{\Fo}$. For each $M\in\rmd{\Fo}$, we choose a short exact sequence
\begin{equation} \label{eq:std1}
0\ra M \xra{\iot_M} I_M\xra{\pi_M} \Ome^{-1} M\ra 0 \;\; ,
\end{equation}
where $I_M$ belongs to $\Fo$ identified with $\rin(\Fo)$.
By splicing together the short exact sequences
from~\eqref{eq:std1}
we obtain a standard injective resolution
\begin{equation} \label{eq:std2}
\hat{I}_M\df\quad
I_M\xra{\nu_{M,1}} I_{\Ome^{-1}}\xra{\nu_{M,2}} I_{\Ome^{-2}M}\ra\cdots
\end{equation}
of $M$. Since the automorphism $\Sig$ of $\rmd{\Fo}$ is
an exact functor, we may assume $I_{\Sig M}=\Sig I_M$, and we obtain an
isomorphism
\[
\sig_M\df \Sig\Ome^{-1} M\ra \Ome^{-1}\Sig M.
\]
Next, the above short exact sequence induces a self equivalence $\Ome^{-1}$
of the stable category $\smd{\Fo}$. Recall that $\smd{\Fo}$ is a triangulated
category with suspension $\Ome^{-1}$. Moreover, $(\Sig,\sig)$ and
$(\Ome^{-n},(-1)^n\id_{\Ome^{-n-1}})$ are triangle functors on $\smd{\Fo}$.

Finally, if
\[
X_\bul: X_1\xra{\alp_1} X_2\xra{\alp_2}\cdots
\xra{\alp_{n-1}}X_n\xra{\alp_n}\Sig X_1
\]
is an exact $\nSig$-sequence in $\Fo$, we may interpret it as the beginning
of an injective resolution of $\Ker(\alp_1)\in\rmd{\Fo}$. Thus, we can lift
$\id_{\Ker{\alp_1}}$ to a unique, up to homotopy, morphism of complexes
from $X_\bul$ to $\hat{I}_{\Ker{\alp_1}}$. From the factorization
\[\xymatrix{
X_n\ar[rr]^{\alp_n}\ar@{->>}[rd]&&\Sig X_1 \;\;, \\
&\Sig\Ker{\alp_1}\ar@{>->}[ru]}
\]
we obtain a morphism
\[
\del_{X_\bul}\df \Sig\Ker(\alp_1)\ra \Ome^{-n}\Ker(\alp_1),
\]
which becomes an isomorphism in $\smd{\Fo}$.

\begin{lem} \label{lem:class1}
Associated to $\foa$ we have a natural isomorphism
\[
\del(\foa)\df (\Sig,\sig)\ra (\Ome^{-n},(-1)^n\id_{\Ome^{-n-1}})
\]
of triangle functors.
\end{lem}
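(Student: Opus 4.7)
The plan is to construct $\del(\foa)_N$ for each $N \in \smd{\Fo}$ by choosing an appropriate $n$-angle, then verify independence of that choice and naturality via axioms (F1c) and (F3), and finally check compatibility with the triangle functor structure via (F2). Every object of $\smd{\Fo}$ is, up to the auto-equivalence $\Ome^2$, of the form $\Ker(\alp_1) \in \rmd{\Fo}$ for some morphism $\alp_1\df X_1 \ra X_2$ in $\Fo$: indeed, any $M \in \rmd{\Fo}$ has an injective presentation by representables, whose kernel is $\Ome^2 M$. Given such an $\alp_1$, use (F1c) to embed $\alp_1$ into an $n$-angle $X_\bul \in \foa$ and set $\del(\foa)_{\Ker(\alp_1)} := \del_{X_\bul}$, the morphism constructed in the paragraph preceding the lemma.

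For independence of the choice of $X_\bul$, consider two extensions $X_\bul, X'_\bul \in \foa$ of the same $\alp_1$. Axiom (F3) applied to $(\id_{X_1},\id_{X_2})$ yields a morphism of $n$-angles $\vph_\bul\df X_\bul \ra X'_\bul$ which is a weak isomorphism. Since a lift of $\id_{\Ker(\alp_1)}$ to the standard injective resolution $\hat I_{\Ker(\alp_1)}$ is unique up to chain homotopy, one may route such a lift through either $X_\bul$ or through $X'_\bul$ via $\vph_\bul$; comparing the induced maps on the quotient $\Sig\Ker(\alp_1)$ gives $\del_{X_\bul} = \del_{X'_\bul}$ in $\smd{\Fo}$. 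Naturality of $\del(\foa)$ in $N$ is proved analogously: a morphism $f\df N \ra N'$ lifts to morphisms $\vph_1,\vph_2$ on the chosen projective presentations, and (F3) extends this to a morphism of $n$-angles, producing the required commutative square relating $\del(\foa)_N$ and $\del(\foa)_{N'}$ via the functoriality of lifts to the standard injective resolutions.

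The core step is verifying the morphism-of-triangle-functors identity
\[
(-1)^n\,\del(\foa)_{\Ome^{-1}N} = \Ome^{-1}(\del(\foa)_N) \circ \sig_N
\]
in $\smd{\Fo}$. For this I would invoke axiom (F2): the left rotation $rX_\bul \in \foa$ of $X_\bul$ has $\Ker(\alp_2) \cong \Ome^{-1}\Ker(\alp_1)$ in $\smd{\Fo}$ (witnessed by the short exact sequence $0 \ra \Ker(\alp_1) \ra X_1 \ra \Ker(\alp_2) \ra 0$ with $X_1$ injective), and carries the sign $(-1)^n$ on its terminal morphism $\Sig\alp_1$. Computing $\del_{rX_\bul}$ via a lift of $\id_{\Ker(\alp_2)}$ to $\hat I_{\Ome^{-1}\Ker(\alp_1)}$, and comparing with the $\Ome^{-1}$-shift of the lift that produced $\del_{X_\bul}$, the rotation sign in (F2) matches precisely the structural sign of the triangle functor $(\Ome^{-n},(-1)^n\id)$, while the interchange isomorphism $\sig$ enters through the normalization $I_{\Sig M} = \Sig I_M$ used to relate $\Sig$ and $\Ome^{-1}$. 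The hard part of the proof will be this sign and shift bookkeeping, where one must show that the sign from (F2) aligns with the canonical sign of $\Ome^{-n}$ without any residual discrepancy. That $\del(\foa)$ is an \emph{isomorphism} of triangle functors requires no extra work, since each $\del_{X_\bul}$ is already an isomorphism in $\smd{\Fo}$ by the observation made at the end of the paragraph preceding the lemma.
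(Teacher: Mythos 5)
Your proposal is correct and follows essentially the same route as the paper, which itself defers both the independence-of-choice argument and the sign/shift bookkeeping for the compatibility square to Heller's Lemma~16.3, exactly the two points you identify as the delicate ones. One small simplification: since $\Fo$ has split idempotents, every $M\in\rmd{\Fo}$ is literally of the form $\Ker(\alp_1)$ for a morphism $\alp_1$ of $\Fo$ (take an injective copresentation by representables, using Proposition~\ref{axi:prp}(b)), so your detour through the autoequivalence $\Ome^{2}$ is unnecessary.
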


\begin{proof}
By Proposition~\ref{axi:prp} we find for each $M\in\rmd{\Fo}$ an
$n$-angle $X_\bul\in\foa$ such that $\Ker(\alp_1)=M$. We set
then $\del(\foa)_M=\del_{X_\bul}\in\sHom_\Fo(\Sig M,\Ome^{-n}M)$.
As in~\cite[p.53]{Heller68a} one verifies that this isomorphism does
not depend of the particular choice of $X_\bul\in\foa$.

It remains to show that we have for each $M$ a commutative diagram
\[\xymatrix{
\Sig\Ome^{-1}\ar[d]_{\del(\foa)_{\Ome^{-1} M}}\ar[rr]^{\sig_M}& &\Ome^{-1}\Sig M\ar[d]^{\Ome^{-1}\del(\foa)_M}\\
\Ome^{-n-1}M\ar[rr]_{(-1)^n\id_{\Ome^{-1-n}M}}&&\Ome^{-1-n} M
}
\]
in $\smd{\Fo}$. This is mutatits mutandis the same argument as in the proof
of~\cite[Lemma 16.3]{Heller68a}.
\end{proof}

\begin{lem} \label{lem:class2}
Suppose $\rmd{\Fo}$ is a Frobenius category.
Let $\Theta\df(\Sig,\sig)\ra (\Ome^{-n},(-1)^n\id_{\Ome^{-n-1}})$ be an
isomorphism of triangle functors in $\smd{\Fo}$.
Then the collection $\foa_\Theta$ of exact $\nSig$-sequences
$X_\bul=(X_1\xra{\alp_1}\cdots\xra{\alp_{n-1}} X_n\xra{\alp_n} \Sig X_1)$ in
$\Fo$ such that $\del_{X_\bul}=\Theta_{\Ker(\alp_1)}$ is a pre-$n$-angulation of
$(\Fo,\Sig)$.
\end{lem}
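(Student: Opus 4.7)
The plan is to verify the three pre-$n$-angulation axioms (F1), (F2), (F3) for the collection $\foa_\Theta$. Throughout I would work inside the Frobenius category $\rmd{\Fo}$, identifying $\Fo$ with $\rin{\Fo}$ via Proposition~\ref{axi:prp}(b), so that for every exact $\nSig$-sequence $X_\bul$ the kernel $M:=\Ker(\Fo(-,\alp_1))\in\rmd{\Fo}$ and the morphism $\del_{X_\bul}\df\Sig M\ra\Ome^{-n}M$ in $\smd{\Fo}$ are available for direct comparison with $\Theta_M$.

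Axioms (F1a) and (F1b) are immediate: exact $\nSig$-sequences are closed under direct sums and summands; $M$, $\del$ and $\Theta$ are all additive in $X_\bul$; and trivial sequences have $M=0$, making the condition $\del=\Theta_0=0$ vacuous. For (F1c), given $\alp_1\df X_1\ra X_2$ I would set $M:=\Ker(\Fo(-,\alp_1))$ and extend to an injective resolution
\[
0\ra M\ra \Fo(-,X_1)\ra\Fo(-,X_2)\ra\Fo(-,X_3)\ra\cdots\ra\Fo(-,X_n)\ra N\ra 0
\]
in $\rmd{\Fo}$ with $X_3,\ldots,X_n\in\Fo$, so that $N\cong\Ome^{-n}M$ in $\smd{\Fo}$. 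The isomorphism $\Theta_M^{-1}$ then yields a morphism $N\ra\Sig M$ in $\rmd{\Fo}$ that is a stable isomorphism; its projective-injective kernel and cokernel are absorbed by enlarging $X_n$ and its incoming map, after which composing with $\Sig M\hookrightarrow\Fo(-,\Sig X_1)$ and applying Yoneda defines $\alp_n\df X_n\ra\Sig X_1$. The resulting $X_\bul$ is exact and $\del_{X_\bul}=\Theta_M$ by construction.

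For (F2), the kernel of $\alp_2$ in the left rotation $X_\bul^{[1]}$ is $M':=\Ker(\Fo(-,\alp_2))$, and the short exact sequence $0\ra M\ra\Fo(-,X_1)\ra M'\ra 0$ coming from exactness of $X_\bul$ exhibits $M'\cong\Ome^{-1}M$ in $\smd{\Fo}$. Unwinding the definition of $\del_{X_\bul^{[1]}}$ --- the closing map $(-1)^n\Sig\alp_1$ factors through $\Sig M'\cong\Sig\Ome^{-1}M$ --- identifies it with $(-1)^n$ times the composite $\Sig\Ome^{-1}M\xra{\sig_M}\Ome^{-1}\Sig M\xra{\Ome^{-1}\Theta_M}\Ome^{-n-1}M$, and this equals $\Theta_{M'}$ precisely because $\Theta$ is a morphism of triangle functors; the $(-1)^n$ matches the sign in the target $(\Ome^{-n},(-1)^n\id)$, as encoded by the square in the proof of Lemma~\ref{lem:class1}.

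The principal work lies in (F3). A morphism $(\vph_1,\vph_2)$ satisfying $\bet_1\vph_1=\vph_2\alp_1$ descends to $\overline{\vph}\df M\ra M'$ in $\rmd{\Fo}$, and the comparison lemma for injective resolutions extends this to $\vph_3,\ldots,\vph_n$ together with a provisional closing component $\widetilde\vph\df\Sig X_1\ra\Sig Y_1$. The obstacle --- and the technical heart of the proof --- is to replace $\widetilde\vph$ by the required $\Sig\vph_1$. The defining equations $\del_{X_\bul}=\Theta_M$ and $\del_{Y_\bul}=\Theta_{M'}$, combined with naturality $\Theta_{M'}\circ\Sig\overline\vph=\Ome^{-n}\overline\vph\circ\Theta_M$, force $\widetilde\vph$ and $\Sig\vph_1$ to coincide in $\smd{\Fo}$; their difference therefore factors through an injective of $\rmd{\Fo}$, i.e.\ through an object of $\Fo$, and can be absorbed by modifying $\vph_n$ along the epimorphism $\Fo(-,X_n)\twoheadrightarrow\Sig M$ without disturbing $\vph_3,\ldots,\vph_{n-1}$ or the earlier squares. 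Converting this stable-category compatibility into an on-the-nose equality of the closing square is where the argument requires most care.
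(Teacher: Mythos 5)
Your proposal is correct and follows essentially the same route as the paper: (F1c) by realizing $\Theta_M$ at the tail of an injective resolution in $\rmd{\Fo}$, (F2) via the triangle-functor compatibility square from Lemma~\ref{lem:class1}, and (F3) by extending $(\vph_1,\vph_2)$ with the comparison lemma and then correcting the closing component by a stably trivial difference factored through an injective and absorbed into $\vph_n$. The only cosmetic point is that the equality $\del_{X_\bul}=\Theta_{\Ker(\alp_1)}$ lives in $\sHom$, so in (F1c) the adjustment "by projective-injectives" is only needed up to stable equivalence, exactly as in the paper's construction of $X_n$ and $\theta$.
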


\begin{proof}
We verify the axioms (F1)-(F3) for $\foa_\Theta$.

(F1a) and (F1b) are trivially fulfilled.

(F1c): Let $\alp_1\in\Fo(X_1,X_2)$, and $A:=\Ker(\alp_1)\rmd{\Fo}$. Clearly,
we can find in $\rmd{\Fo}$ a commutative diagram with  exact rows
\[\xymatrix{
0\ar[r] &A\ar@{=}[d]\ar[r]^{\iot} &X_1\ar[d]\ar[r]^{\alp_1}&X_2\ar[d]\ar[r]^{\alp_2}& X_3\ar[d]\ar[r]^{\alp_3}&
\cdots \ar[r]^{\alp_{n-2}} &X_{n-1}\ar[d]\ar[r]^{\pi_n} &C\ar[d]^{\vph}\ar[r] &0\\
0\ar[r] &A\ar[r] & I_A\ar[r] & I_{\Ome^{-1}A}\ar[r]& I_{\Ome^{-2}A}\ar[r]&\cdots\ar[r]&I_{\Ome^{2-n}A}\ar[r]&\Ome^{1-n}A\ar[r] & 0
}\]
and $X_3,\ldots, X_{n-1}\in\Fo$. Note that the class of $\vph$ in $\smd{\Fo}$
is an isomorphism.

Next we find a commutative diagram with exact rows
\[\xymatrix{
0\ar[r] &C\ar@{=}[d]\ar[r]^{\iot_{n-1}} &X_n\ar[d]\ar[r]^{\pi_n} &\Sig A\ar[d]^{\theta}\ar[r]& 0\\
0\ar[r] &C\ar[d]_{\vph}\ar[r]& I_C\ar[d]\ar[r] &\Ome^{-1} C\ar[d]^{\Ome^{-1}\vph}\ar[r] & 0\\
0\ar[r] &\Ome^{1-n} A\ar[r] &I_{\Ome^{1-n}A}\ar[r] &\Ome^{-n}A\ar[r] & 0
}\]
such that $X_n\in\Fo$ and
$\Ome^{-1}\vph\theta=\Theta_A\in\sHom_\Fo(\Sig A,\Ome^{-n}A)$.
Then, $X_\bul$ with $\alp_{n-1}=\iot_{n-1}\pi_{n-1}$ and $\alp_n=(\Sig\iota)\pi_n$
is an $\nSig$-sequence with $\del_{X_\bul}=\Theta_A$.

(F2): This follows since $\Theta$ is an isomorphism of \emph{triangle} functors,
see the second part of the proof of Lemma~\ref{lem:class1}.

(F3): In the situation of (F3), let $A=\Ker(\alp_1)$ and $B=\Ker(\bet_1)$.
We clearly obtain a commutative diagram:
\[\xymatrix{
&X_1\ar[dd]_{\vph_1}\ar[r]^{\alp_1}&X_2\ar[dd]_{\vph_2}\ar[r]^{\alp_2}&X_3\ar[dd]_{\vph_3}\ar@{.}[r]&X_n\ar[dd]_{\vph_n}\ar[rr]^{\alp_n}\ar@{->>}[rd]^{\pi_n}&&\Sig X_n\ar[dd]^{\vph_{n+1}}\\
A\ar[dd]_{\vph}\ar[ru]^{\iot}& &                 &             &&\Sig A\ar[dd]_(.3){\psi}\ar@{>->}[ru]^{\Sig\iot}\\
&Y_1\ar[r]_{\bet_1}&Y_2\ar[r]_{\bet_2}&Y_3\ar@{.}[r]&Y_n\ar'[r][rr]_(.3){\bet_n}\ar@{->>}[rd]_{\rho_n}&&\Sig Y_n\\
B\ar[ru]_{\kappa}& &                 &             &&\Sig B\ar@{>->}[ru]_{\Sig\kappa}\\
}\]
By construction, $\del_{Y_\bul}\psi=(\Ome^{-n}\vph)\del_{X_\bul}$. On the other
hand, by the naturality of $\Theta$, we have
\[
\del_{Y_\bul}(\Sig\vph)=(\Ome^{-n}\vph)\del_{X_\bul}
\in\sHom_\Fo(\Sig A,\Ome^{-n} B).
\]
Thus, we have $\eta'\in\Hom_\Fo(\Sig A, Y_n)$ and
$\eta''\in\Hom_\Fo(\Sig X_n,\Sig B)$  such that
$\psi-\Sig\vph=\rho_n\eta'=\eta''(\Sig\iot)$. As a consequence, we
may replace in the above diagram $\psi$ by $\Sig\vph$,
$\vph_n$ by $\vph'_n=\vph_n-\eta'\pi_n$ and $\vph_{n+1}$ by
$\vph'_{n+1}=\vph_{n+1}-(\Sig\kappa)\eta''$.
Now, $(\vph'_{n+1}-\Sig\vph_1)(\Sig\iot)=\Sig(\vph\kappa -\vph_1\iot)=0$,
so that $(\vph_1,\vph_2,\ldots,\vph_{n-1},\vph'_n)$ is a morphism
of $\nSig$-sequences.
\end{proof}

\begin{prp}
Let $(\Fo,\Sig,\foa)$ be a pre-n-angulated category. Then the map $\del$
from the class of pre-n-angulations of $(\Fo,\Sig)$ to the class of
isomorphisms of triangle functors between $(\Sig,\sig)$ and
$(\Ome^{-n},(-1)^n\id_{\Ome^{-n-1}})$, defined in Lemma~\ref{lem:class1},
is a bijection. As a consequence, the group of automorphisms of the
triangle functor $(\Ome^{-n},(-1)^n\id_{\Ome^{-n-1}})$ acts simply transitively
on the class of pre-$n$-angulations of $(\Fo,\Sig)$.
\end{prp}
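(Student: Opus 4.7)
The plan is to combine the two lemmas just proved with the rigidity statement from Proposition~\ref{axi:prp}(c). Concretely, Lemma~\ref{lem:class1} already shows that $\del$ is a well-defined map from pre-$n$-angulations into isomorphisms of triangle functors $(\Sig,\sig)\to(\Ome^{-n},(-1)^n\id_{\Ome^{-n-1}})$, and Lemma~\ref{lem:class2} furnishes, for every such isomorphism $\Theta$, a pre-$n$-angulation $\foa_\Theta$ consisting of all exact $\nSig$-sequences $X_\bul$ with $\del_{X_\bul}=\Theta_{\Ker(\alp_1)}$. It remains to verify that these two constructions are mutually inverse.

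First, for surjectivity, given $\Theta$ I let $\foa_\Theta$ be the pre-$n$-angulation produced by Lemma~\ref{lem:class2}. By the very definition of $\foa_\Theta$, every $X_\bul\in\foa_\Theta$ satisfies $\del_{X_\bul}=\Theta_{\Ker(\alp_1)}$, so computing $\del(\foa_\Theta)$ via any representative $n$-angle whose first kernel is $M$ (such representatives exist by axiom~(F1c) applied inside $\foa_\Theta$) yields $\del(\foa_\Theta)_M=\Theta_M$. Hence $\del(\foa_\Theta)=\Theta$.

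Next, for injectivity, suppose $\foa$ is any pre-$n$-angulation and set $\Theta:=\del(\foa)$. By the definition of $\del(\foa)$, every $X_\bul\in\foa$ satisfies $\del_{X_\bul}=\Theta_{\Ker(\alp_1)}$, so $\foa\subseteq\foa_\Theta$. Since $\foa_\Theta$ is itself a pre-$n$-angulation by Lemma~\ref{lem:class2}, Proposition~\ref{axi:prp}(c) forces $\foa=\foa_\Theta$. Thus $\foa$ is recovered from $\del(\foa)$, which gives injectivity and identifies the two constructions as inverse bijections. The main subtle point here is exactly this appeal to Proposition~\ref{axi:prp}(c): one needs to know that a pre-$n$-angulation is determined by any pre-$n$-angulation it contains, and this is where the weak-isomorphism machinery of Lemma~\ref{lem:wiso} is implicitly doing the work.

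Finally, the simply transitive action is a formal consequence. The group $G$ of triangle automorphisms of $(\Ome^{-n},(-1)^n\id_{\Ome^{-n-1}})$ acts on the set of triangle isomorphisms $(\Sig,\sig)\to(\Ome^{-n},(-1)^n\id_{\Ome^{-n-1}})$ by post-composition, and this action is free and transitive because the target is a single fixed triangle functor. Transporting this action through the bijection $\del$ yields a simply transitive $G$-action on the class of pre-$n$-angulations of $(\Fo,\Sig)$, completing the proof.
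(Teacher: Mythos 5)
Your proof is correct and follows essentially the same route as the paper: surjectivity via Lemma~\ref{lem:class2} (which gives $\del(\foa_\Theta)=\Theta$), and injectivity via the inclusion $\foa\subseteq\foa_\Theta$ combined with Proposition~\ref{axi:prp}(c). The extra detail you supply on the simply transitive action is a correct formal consequence of the bijection.
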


\begin{proof}
If $\Theta\df(\Sig,\sig)\ra (\Ome^{-n},(-1)^n\id_{\Ome^{-n-1}})$  is an
isomorphism of triangle functors, we have by Lemma~\ref{lem:class2}
$\del(\foa_\Theta)=\Theta$. If $\del(\foa')=\Theta$, we have by construction
$\foa'\subset\foa_\Theta$, but then $\foa'=\foa_\Theta$ by
Proposition~\ref{axi:prp}~(c).
\end{proof}

\section{Standard construction}
\label{s:standard-construction}
\begin{defn} \label{def.ct}
Let $\mathcal{T}$ be a triangulated category with suspension $\Sigma_3$.
A full subcategory $\mathcal{S} \subseteq \mathcal{T}$ is called
\emph{$d$-cluster tilting} if it is functorially 
finite~\cite[p.82]{AusSm80}, and
\begin{align*}
\mathcal{S} &=\{X\in\mathcal{T}\mid\mathcal{T}(\mathcal{S}, \Sigma_3^i X)= 0\;
\forall i \in \{1, \ldots, d-1\} \} \\
& =\{X \in\mathcal{T}\mid\mathcal{T}(X, \Sigma_3^i \mathcal{S})= 0 \;
\forall i  \in \{1, \ldots, d-1\} \}.
\end{align*}
Note that such a category has first been called maximal $(d-1)$-orthogonal by 
Iyama \cite{Iyama07a}.
\end{defn}

\begin{theo} \label{theorem.standard}
Let $\mathcal{T}$ be a triangulated category with an $(n-2)$-cluster tilting
subcategory $\mathcal{F}$, which is closed under $\Sigma_3^{n-2}$, where
$\Sigma_3$ denotes the suspension in $\mathcal{T}$.
Then $(\mathcal{F}, \Sigma_3^{n-2}, \foa)$ is an $n$-angulated category,
where $\foa$ is the class of all sequences
\[ X_1\xra{\alp_1} X_2\xra{\alp_2}\cdots\xra{\alp_{n-1}} X_n\xra{\alp_n} \Sigma_3^{n-2} X_1 \]
such that there exists a diagram
\[ \xymatrix@!=.5pc{ & X_2 \ar[rr]^{\alpha_2} \ar[rd] && X_3 \ar[rd] && \cdots && X_{n-1} \ar[rd]^{\alpha_{n-1}} \\ X_1 \ar[ru]^{\alpha_1} \ar@{<-{|-}}[rr] && X_{2.5} \ar[ru] \ar@{<-{|-}} [rr] && X_{3.5} & \cdots & X_{n-1.5} \ar[ru] \ar@{<-{|-}}[rr] && X_n} \]
with $X_i \in \mathcal{T}$ for $i \not \in \mathbb{Z}$, such that all oriented
triangles are triangles in $\mathcal{T}$, all non-oriented triangles 
commute, and $\alpha_n$ is the composition along the lower edge of the diagram.
\end{theo}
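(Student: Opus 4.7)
The plan is to check the axioms (F1)--(F4) of an $n$-angulation for the collection $\foa$ specified in the theorem. Axioms (F1a) and (F1b) are essentially immediate: the direct sum of two zigzag diagrams witnesses the direct sum $n$-angle; closure under summands follows because $\mathcal{F}$, $\mathcal{T}$ and the class of distinguished triangles are all closed under summands, so any splitting at the outer vertices propagates through the intermediate half-integer vertices; and the trivial $n$-angle is produced by the diagram whose first triangle is $X\xra{\id}X\to 0\to \Sigma_3 X$ and whose remaining triangles are all $0\to 0\to 0\to 0$.

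The substance of (F1c) is to construct, for a given $\alpha_1\colon X_1\to X_2$ in $\mathcal{F}$, a zigzag diagram of the prescribed shape. I plan to build it inductively: first complete $\alpha_1$ in $\mathcal{T}$ to a triangle $X_1\to X_2\to X_{2.5}\to \Sigma_3 X_1$, and then at each of the next $n-3$ layers take a left $\mathcal{F}$-approximation $X_{i.5}\to X_{i+1}$ (which exists because $\mathcal{F}$ is functorially finite by hypothesis) and complete it to a triangle $X_{i.5}\to X_{i+1}\to X_{(i+1).5}\to \Sigma_3 X_{i.5}$. An induction using the Ext-vanishing of the $(n-2)$-cluster-tilting definition, applied to the functors $\mathcal{T}(-,\Sigma_3^{j}\mathcal{F})$, shows that the successive cofibers become progressively more orthogonal to $\mathcal{F}$; in the final step the cofiber lies in $\Sigma_3^{n-2}\mathcal{F}\subseteq\mathcal{F}$, so the last vertex $X_n$ automatically belongs to $\mathcal{F}$, and the composition along the lower edge defines the map $\alpha_n\colon X_n\to \Sigma_3^{n-2}X_1$.

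For (F2), I would obtain the rotation of a zigzag diagram by merging its first two triangles using Verdier's octahedral axiom on the composition $X_2\to X_{2.5}\to X_3$: this produces a new leading triangle $X_2\to X_3\to X'_{3.5}\to \Sigma_3 X_2$ together with an auxiliary triangle $X_{2.5}\to X'_{3.5}\to X_{3.5}\to \Sigma_3 X_{2.5}$ that glues into the remainder of the original diagram, and reindexing yields a valid zigzag diagram for the rotated sequence; careful tracking of signs through the $n-2$ octahedra reproduces the required factor $(-1)^n$ on the final map. Axiom (F3) is then handled by iterating (TR3) in $\mathcal{T}$: the given $\varphi_1,\varphi_2$ together with the first pair of triangles of the zigzag diagrams for $X_\bullet$ and $Y_\bullet$ produce a compatible $\varphi_{2.5}$, which combined with the next pair and a further application of (TR3) produces $\varphi_3$, and so on until $\varphi_n$ is obtained.

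The main obstacle is (F4). Given $\varphi_\bullet\colon X_\bullet\to Y_\bullet$ together with zigzag diagrams $\mathcal{D}_X,\mathcal{D}_Y$ witnessing $X_\bullet,Y_\bullet\in\foa$, I would construct a zigzag diagram $\mathcal{D}_C$ for the cone $C(\varphi_\bullet)$ by running a cascade of $3\times 3$-diagrams (Neeman's good morphisms of triangles, or equivalently iterated octahedra) layer by layer through $\mathcal{D}_X$ and $\mathcal{D}_Y$. Specifically, at the $i$th layer a $3\times 3$-diagram applied to the pair $(T_i^X,T_i^Y)$ with connecting maps $\varphi_{i},\varphi_{i.5},\varphi_{i+1}$ yields a third triangle whose outer morphisms are exactly the $2\times 2$ block matrices demanded by (F4); this supplies the $i$th triangle of $\mathcal{D}_C$, while the cone $C(\varphi_{i.5})$ plays the role of the next half-integer vertex of $\mathcal{D}_C$ and is fed into the following layer. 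The closure hypothesis $\Sigma_3^{n-2}\mathcal{F}\subseteq\mathcal{F}$ guarantees that the last upper vertex of $\mathcal{D}_C$ remains in $\mathcal{F}$. The chief technical difficulty, and the reason this step is genuinely substantial, lies in the careful inductive choice of the maps $\varphi_{i.5}$ and in the sign bookkeeping required to align the intrinsic signs generated by the cascade of octahedra with the prescribed signs of the $2\times 2$ matrices defining the cone in (F4).
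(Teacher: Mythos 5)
Your outline follows the same route as the paper --- (F1c) by coresolving the cone of $\alpha_1$ over $\mathcal{F}$, and (F4) by a cascade of octahedra assembling a zigzag diagram for the mapping cone --- but it omits the one non-formal ingredient that makes the cascade work, namely the vanishing
\[
\mathcal{T}(X_{i+1.5},\Sigma_3^j\mathcal{F})=0 \qquad\text{for } 0<j<i<n-2,
\]
which the paper isolates as Lemma~\ref{lemma.no_hom_to_shift} and proves by downward induction from the cluster-tilting orthogonality. In the verification of (F4) this vanishing is used at every layer: the relevant octahedron produces a triangle $Y_{i+2}\to E\to X_{i+2.5}\to\Sigma_3 Y_{i+2}$, and only because the connecting map $X_{i+2.5}\to\Sigma_3 Y_{i+2}$ is forced to vanish does $E$ split as $X_{i+2.5}\oplus Y_{i+2}$; that splitting is precisely what allows the morphisms of the new zigzag to be put in the lower-triangular $2\times 2$ block form demanded by (F4). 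Without it, your layer-by-layer $3\times 3$ scheme yields triangles whose middle terms are unidentified extensions rather than the prescribed direct sums, so the difficulty is not merely ``sign bookkeeping'' and ``careful inductive choice'' --- a genuine orthogonality input is required, and it is exactly here (not only in (F1c)) that the $(n-2)$-cluster-tilting hypothesis enters.

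The same omission undermines your argument for (F3). A first application of (TR3) does produce $\varphi_{2.5}$ from $(\varphi_1,\varphi_2)$, but the next step is not an instance of (TR3): to obtain $\varphi_3$ you have only the single map $\varphi_{2.5}$ between the triangles $X_{2.5}\to X_3\to X_{3.5}$ and $Y_{2.5}\to Y_3\to Y_{3.5}$, whereas (TR3) needs two consecutive commuting vertical maps as input. What is actually needed is to extend the composite $X_{2.5}\to Y_{2.5}\to Y_3$ along $X_{2.5}\to X_3$, and the obstruction lies in $\mathcal{T}(X_{3.5},\Sigma_3 Y_3)$ --- again the vanishing lemma (the paper avoids the issue by deducing (F3) directly from (F4)). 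Two smaller points: your direct octahedral argument for (F2) only treats one direction of the ``if and only if'' (the paper obtains the converse by combining the forward direction with closure of $\foa$ under $2n$-fold rotation), and closure of $\foa$ under direct summands in (F1a) is less immediate than you suggest, since a direct-sum decomposition of an $\nSig$-sequence need not be visible in the chosen zigzag diagram.
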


\begin{rems} \label{rems:std}
In the situation of Theorem~\ref{theorem.standard} we have the following:
\begin{itemize}
\item
The suspension in the $n$-angulated category $\mathcal{F}$
is given by $\Sigma_3^{n-2}$. In particular it should not be confused with
$\Sigma_3$.
\item
If $\Fo\subset\Tr$ is an $(n-2)$-cluster tilting subcategory, then
$\Fo$ is \emph{not} closed under $\Sig_3^h$ for $1\leq h<n-2$.
If $\mathcal{T}$ has a Serre functor $\mathbb{S}$, then the assumption
that $\mathcal{F}$ is closed under $\Sigma_3^{n-2}$ is equivalent to
$\mathcal{F}$ being closed under $\mathbb{S}$. (It follows immediately from
the definition of $(n-2)$-cluster tilting that $\mathcal{F}$ is closed under
$\mathbb{S} \Sigma_3^{-(n-2)}$.)
\end{itemize}
\end{rems}

The following observation will turn out to be helpful in the proof of
Theorem~\ref{theorem.standard}.

\begin{lem} \label{lemma.no_hom_to_shift}
In the notation of Theorem~\ref{theorem.standard}, we have
\[ {\mathcal{T}}(X_{i+1.5}, \Sigma_3^j \mathcal{F}) = 0 \text{ for }
0 < j < i < n-2\]
\end{lem}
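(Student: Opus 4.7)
The plan is to prove the lemma by downward induction on $i$, starting from the maximum allowed value $i = n-3$. The pivotal observation is that one should apply $\mathcal{T}(-, \Sigma_3^j \mathcal{F})$ not to the defining triangle $X_{i.5} \to X_{i+1} \to X_{i+1.5} \to \Sigma_3 X_{i.5}$ which exhibits $X_{i+1.5}$ as a cone, but rather to the \emph{successor} triangle in the Postnikov chain, i.e.\ the triangle whose first vertex is $X_{i+1.5}$. Here, when $i = n-3$, the object in position $X_{i+2.5}$ is to be read as $X_n$, since the final triangle of the chain in Theorem~\ref{theorem.standard} is $X_{n-1.5} \to X_{n-1} \to X_n \to \Sigma_3 X_{n-1.5}$.

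Concretely, fix $i$ with $2 \le i \le n-3$ and $F \in \mathcal{F}$, and consider the successor triangle
\[ X_{i+1.5} \to X_{i+2} \to X_{i+2.5} \to \Sigma_3 X_{i+1.5} \]
in $\mathcal{T}$ (with the convention just stated). Applying $\mathcal{T}(-, \Sigma_3^j F)$ and extracting the stretch of the long exact sequence around $\mathcal{T}(X_{i+1.5}, \Sigma_3^j F)$ yields the exact piece
\[ \mathcal{T}(X_{i+2}, \Sigma_3^j F) \longrightarrow \mathcal{T}(X_{i+1.5}, \Sigma_3^j F) \longrightarrow \mathcal{T}(X_{i+2.5}, \Sigma_3^{j+1} F). \]
It therefore suffices to show that both outer terms vanish.

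The left term vanishes because $X_{i+2} \in \mathcal{F}$ and $0 < j < n-2$ by hypothesis, so the $(n-2)$-cluster tilting property of $\mathcal{F}$ applies. For the right term we distinguish two cases. In the base case $i = n-3$, the third object is $X_n \in \mathcal{F}$ and the shift $j+1$ satisfies $0 < j+1 < n-2$ (using $j < i = n-3$), so cluster tilting again gives vanishing. In the inductive case $i < n-3$, we invoke the induction hypothesis with parameters $i+1$ and $j+1$; the required inequalities $0 < j+1 < i+1 < n-2$ follow respectively from $j \ge 1$, $j < i$ and $i < n-3$, so the hypothesis provides $\mathcal{T}(X_{i+2.5}, \Sigma_3^{j+1} F) = 0$.

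The main subtlety, and the reason for choosing the successor triangle, is the following: had one applied the functor to the defining triangle, the long exact sequence would realise $\mathcal{T}(X_{i+1.5}, \Sigma_3^j F)$ as a quotient of $\mathcal{T}(X_{i.5}, \Sigma_3^{j-1} F)$, and an induction along this route stalls at $j=1$, where the term becomes $\mathcal{T}(X_{i.5}, \mathcal{F})$ with no reason to vanish. The successor triangle circumvents this by replacing the problematic term with $\mathcal{T}(X_{i+2.5}, \Sigma_3^{j+1} F)$, whose exponent $j+1$ is strictly larger than $1$, so vanishing is obtained either directly from cluster tilting (base case) or from the induction hypothesis (inductive step).
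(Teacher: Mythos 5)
Your proof is correct and follows essentially the same route as the paper's: downward induction on $i$, applying $\mathcal{T}(-,\Sigma_3^j\mathcal{F})$ to the triangle $X_{i+1.5}\to X_{i+2}\to X_{i+2.5}\to\Sigma_3 X_{i+1.5}$ to get a monomorphism $\mathcal{T}(X_{i+1.5},\Sigma_3^j\mathcal{F})\to\mathcal{T}(X_{i+2.5},\Sigma_3^{j+1}\mathcal{F})$, with the base case $i=n-3$ handled via $X_n\in\mathcal{F}$. The only difference is your added (accurate) commentary on why the predecessor triangle would not work.
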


\begin{proof}
We use downward induction on $i$. For $i = n-3$ consider the triangle
\[ X_{n-1.5} \ra X_{n-1} \ra X_n \ra \Sigma_3 X_{n-1.5}. \]
Since ${\mathcal{T}}(X_{n-1}, \Sigma_3^j \mathcal{F})
\subseteq {\mathcal{T}}(\mathcal{F}, \Sigma_3^j \mathcal{F}) = 0$
we have a monomorphism
\[ {\mathcal{T}}(X_{n-1.5}, \Sigma_3^j \mathcal{F}) \ra
{\mathcal{T}}(X_n, \Sigma_3^{j+1} \mathcal{F}) = 0, \]
so the claim of the lemma holds for $i=n-3$.

For arbitrary $i < n-3$ we use the triangle
\[ X_{i+1.5} \ra X_{i+2} \ra X_{i+2.5} \ra \Sigma_3 X_{i+1.5}. \]
As above we get a monomorphism
\[ {\mathcal{T}}(X_{i+1.5}, \Sigma_3^j \mathcal{F}) \ra
{\mathcal{T}}(X_{i+2.5}, \Sigma_3^{j+1} \mathcal{F}). \]
Now the latter space is $0$ inductively, so the claim holds.
\end{proof}

\begin{proof}[Proof of Theorem~\ref{theorem.standard}]
We verify the axioms.

(F1) (a) 
The class $\foa$ is closed under direct summands, as
one easily shows using the fact that, up to isomorphism,
the construction of a triangle over a morphism commutes
with the passage to direct summands. Part~(b) is clear.

For (F1)(c) let $\alpha_1$ be a morphism in $\mathcal{F}$. Let $X_{2.5}$ be
the cone of $\alpha_1$ in $\mathcal{T}$. Since $\mathcal{F} \subseteq
\mathcal{T}$ is $(n-2)$-cluster tilting $X_{2.5}$ has an
$\mathcal{F}$-coresolution of length at most $n-2$. Combining this coresolution
with the original triangle we obtain an $n$-angle starting with $\alpha_1$.

Next we verify axiom (F4). So assume we have the following commutative diagram.

\noindent
\begin{minipage}{\textwidth} \[ \xymatrix{
& X_2 \ar[rr]^{\alpha_2} \ar[rd]^{\alpha_2'} \ar[ddd]^(.6){\varphi_2} && X_3 \ar[rd]^{\alpha_3'} && \cdots && X_{n-1} \ar[rd]^{\alpha_{n-1}} \\
X_1 \ar[ru]^{\alpha_1} \ar[ddd]^(.4){\varphi_1} \ar@{<-{|-}}[rr]_(.3){\alpha_n^2} && X_{2.5} \ar[ru]^{\alpha_2''} \ar@{<-{|-}}[rr]_{\alpha_n^3} && X_{3.5} & \cdots & X_{n-1.5} \ar[ru]^{\alpha_{n-2}''} \ar@{<-{|-}}[rr]_{\alpha_n^{n-1}} && X_n \\
\\
& Y_2 \ar[rr]^{\beta_2} \ar[rd]^{\beta_2'} && Y_3 \ar[rd]^{\beta_3'} && \cdots && Y_{n-1} \ar[rd]^{\beta_{n-1}} \\
Y_1 \ar[ru]^{\beta_1} \ar@{<-{|-}}[rr]_{\beta_n^2} && Y_{2.5} \ar[ru]^{\beta_2''} \ar@{<-{|-}}[rr]_{\beta_n^3} && Y_{3.5} & \cdots & Y_{n-1.5} \ar[ru]^{\beta_{n-2}''} \ar@{<-{|-}}[rr]_{\beta_n^{n-1}} && Y_n \\
} \] \captionof{diagram}{Given morphisms} \end{minipage}

\noindent
The aim is to obtain a diagram

\noindent
\begin{minipage}{\textwidth} \[ \xymatrix@!C=.8cm{
& X_3 \oplus Y_2 \ar[rr]^{ \smp{ -\alpha_3 & \\ \varphi_3 & \beta_2}} \ar[rd] && X_4 \oplus Y_3 \ar[rd] && \cdots && X_n \oplus Y_{n-1} \ar[rd]^{\smp{ -\alpha_n & \\ \varphi_n & \beta_{n-1}}} \\
X_2 \oplus Y_1 \ar[ru]^{\smp{ -\alpha_2 & \\ \varphi_2 & \beta_1}} \ar@{<-{|-}}[rr] && H_{2.5} \ar[ru] \ar@{<-{|-}}[rr] && H_{3.5} & \cdots & H_{n-1.5} \ar[ru] \ar@{<-{|-}}[rr] && \Sigma_3^{n-2} X_1 \oplus Y_n \\
} \] \captionof{diagram}{Target diagram} \label{target_diagram} \end{minipage}

\noindent
such that the composition along the lower row is 
$\smp{ - \Sigma_3^{n-2} \alpha_1 & \\ \Sigma_3^{n-2} \varphi_1 & \beta_n}$.

We construct further morphisms $\varphi_i$, and at the same time the objects 
$H_i$ from left to right. (We actually also construct $\varphi_i$ for 
$i \in \{2.5, 3.5, \ldots, n-1.5\}$, even though they are not visible in the 
target diagram.)

We choose $\varphi_{2.5} \colon X_{2.5} \ra Y_{2.5}$ to be a ``good cone morphism'', that is a cone morphism such that the sequence
\[ X_2 \oplus Y_1 \xra{ \smp{ -\alpha_2' & \\ \varphi_2 & \beta_1 }} X_{2.5} \oplus Y_2 \xra{ \smp{ -\alpha_n^2 & \\ \varphi_{2.5} & \beta_2' }} \Sigma_3 X_1 \oplus Y_{2.5} \xra{ \smp{ - \Sigma_3 \alpha_1 & \\ \Sigma_3 \varphi_1 & \beta_n^2 }} \Sigma_3 X_2 \oplus \Sigma_3 Y_1 \]
is a triangle in $\mathcal{T}$.

Now the left-most triangle of the target diagram is the lower horizontal 
triangle in following octahedron.

\noindent
\begin{minipage}{\textwidth} \[ \xymatrix@C=2cm{
& \Sigma_3^{-1} X_{3.5} \ar[d]^(.4){\smp{ \Sigma_3^{-1} \alpha_n^3 \\ 0 }} \ar@{=}[r] & \Sigma_3^{-1} X_{3.5} \ar[d] & \\
X_2 \oplus Y_1 \ar@{=}[d] \ar[r]^{ \smp{ -\alpha_2' & \\ \varphi_2 & \beta_1 }} & X_{2.5} \oplus Y_2 \ar[d]^{\smp{ \alpha_2'' & \\ & \id }} \ar[r]^(.55){ \smp{ -\alpha_n^2 & \\ \varphi_{2.5} & \beta_2' }} & \Sigma_3 X_1 \oplus Y_{2.5} \ar@{-->}[d] \ar[r]^{ \smp{ -\Sigma_3 \alpha_1 & \\ \Sigma_3 \varphi_1 & \beta_n^2 }} & \Sigma_3 X_2 \oplus \Sigma_3 Y_1 \ar@{=}[d] \\
X_2 \oplus Y_1 \ar[r]^{ \smp{ -\alpha_2 & \\ \varphi_2 & \beta_1 }} & X_3 \oplus Y_2 \ar[d]^{(- \alpha_3' \; 0)} \ar@{-->}[r] & H_{2.5} \ar@{-->}[d] \ar@{-->}[r] & \Sigma_3 X_2 \oplus \Sigma_3 Y_1 \\
& X_{3.5} \ar@{=}[r] & X_{3.5}
} \] \captionof{diagram}{} \label{octahedron_1} \end{minipage}

We now iteratedly construct the further triangles of the target diagram 
(Diagram~\ref{target_diagram}). We assume to have a triangle
\[ \xymatrix@=1.5cm{ H_{i+1.5} \ar[r] & X_{i+2.5} \ar[rr]^-{\smp{- \Sigma_3^i \alpha_n^2 \cdots \Sigma_3 \alpha_n^{i+1} \alpha_n^{i+2} \\ \varphi_{i+1.5} \alpha_n^{i+2} }} && \Sigma_3^{i+1} X_1 \oplus \Sigma_3 Y_{i+1.5} \ar[r] & \Sigma_3 H_{i+1.5}.} \]
For $i = 1$ this can be found in the right column of the octahedron in 
Diagram~\ref{octahedron_1}, for $i > 1$ it is in the right column of the 
octahedron in Diagram~\ref{octahedron_3} for $i-1$. We use this triangle to 
construct the following octahedron

\noindent
\begin{minipage}{\textwidth} \[ \xymatrix@C=2cm{
& Y_{i+2} \ar@{-->}[d]_{\smp{0 \\ \id}} \ar@{=}[r] & Y_{i+2} \ar[d]^{\smp{0 \\ \beta_{i+2}'}} \\
H_{i+1.5} \ar@{=}[d] \ar@{-->}[r] & X_{i+2.5} \oplus Y_{i+2} \ar@{-->}[d]_{( \id \; 0)} \ar@{-->}[r]^{(*)} & \Sigma_3^{i+1} X_1 \oplus Y_{i+2.5} \ar[d]^{\smp{\id & \\ & \beta_n^{i+2}}} \ar[r] & \Sigma_3 H_{i+1.5} \ar@{=}[d] \\
H_{i+1.5} \ar[r] & X_{i+2.5} \ar[d]_{0} \ar[r]^-{\smp{- \Sigma_3^i \alpha_n^2 \cdots \Sigma_3 \alpha_n^{i+1} \alpha_n^{i+2} \\ \varphi_{i+1.5} \alpha_n^{i+2} }} & \Sigma_3^{i+1} X_1 \oplus \Sigma_3 Y_{i+1.5} \ar[d]^{(0 \; - \Sigma_3 \beta_{i+1}'')} \ar[r] & \Sigma_3 H_{i+1.5} \\
& \Sigma_3 Y_{i+2} \ar@{=}[r] & \Sigma_3 Y_{i+2}
} \] \captionof{diagram}{} \label{octahedron_2} \end{minipage}

Note that the map $X_{i+2.5} \ra \Sigma_3 Y_{i+2}$ vanishes by 
Lemma~\ref{lemma.no_hom_to_shift}. Hence the left vertical triangle splits, 
and thus can be assumed to have the form above.

We dermine the shape of the map $(*)$: By commutativity of the upper square we 
know that the right column of the matrix $(*)$ is $\smp{0 \\ \beta_{i+2}'}$, 
and by commutativity of the central square we know that the left upper entry of
 the matrix $(*)$ is $- \Sigma_3^i \alpha_n^2 \cdots \alpha_n^{i+2}$. Hence
\[ (*) = \begin{pmatrix} - \Sigma_3^i \alpha_n^2 \cdots \alpha_n^{i+2} & 0 \\ \varphi_{i+2.5} & \beta_{i+2}' \end{pmatrix} \]
for some $\varphi_{i+2.5} \colon X_{i+2.5} \ra Y_{i+2.5}$.

Now we take the upper horizontal triangle of the above octahedron to construct 
the following.

\noindent
\begin{minipage}{\textwidth} \[ \xymatrix@C=1.3cm{
& \Sigma_3^{-1} X_{i+3.5} \ar[d]_{\smp{ \Sigma_3^{-1} \alpha_n^{i+3} \\ 0 }} \ar@{=}[rr] && \Sigma_3^{-1} X_{i+3.5} \ar[d] \\
H_{i+1.5} \ar@{=}[d] \ar[r] & X_{i+2.5} \oplus Y_{i+2} \ar[d]_{\smp{\alpha_{i+2}'' & \\ & \id}} \ar[rr]^{\smp{ - \Sigma_3^i \alpha_n^2 \cdots \alpha_n^{i+2} & 0 \\ \varphi_{i+2.5} & \beta_{i+2}'} } && \Sigma_3^{i+1} X_1 \oplus Y_{i+2.5} \ar@{-->}[d] \ar[r] & \Sigma_3 H_{i+1.5} \ar@{=}[d] \\
H_{i+1.5} \ar[r] & X_{i+3} \oplus Y_{i+2} \ar[d]_{(- \alpha_{i+3}' \; 0)} \ar@{-->}[rr] && H_{i+2.5} \ar@{-->}[d] \ar@{-->}[r] & \Sigma_3 H_{i+1.5} \\
& X_{i+3.5} \ar@{=}[rr] && X_{i+3.5}
}\] \captionof{diagram}{} \label{octahedron_3} \end{minipage}

The lower row of this diagram is supposed to be the $(i+1)$-st triangle of 
the target diagram. We need to check that the composition
\begin{align*}
\tag{$\dagger$} & X_{i+2} \oplus Y_{i+1} \ra H_{i+1.5} \ra X_{i+3} \oplus Y_{i+2}
\intertext{is of the form $\smp{-\alpha_{i+2} & \\ \varphi_{i+2} & \beta_{i+1}}$ for some $\varphi_{i+2} \colon X_{i+2} \ra Y_{i+2}$. First note that by commutativity of the left square of Diagram~\ref{octahedron_3} the above composition is}
= & \smp{\alpha_{i+2}'' & \\ & \id} \circ \Big[ X_{i+2} \oplus Y_{i+1} \ra H_{i+1.5} \ra X_{i+2.5} \oplus Y_{i+2} \Big]
\end{align*}
We first focus on the components of $(\dagger)$ starting in $Y_{i+1}$. By 
commutativity of the central squares of Diagrams~\ref{octahedron_1} (for $i=1$)
and \ref{octahedron_3} (for $i > 1$) we have that these components can be 
rewritten
\[ \smp{\alpha_{i+2}'' & \\ & \id} \circ \Big[ \Sigma_3^i X_1 \oplus Y_{i+1.5} \ra H_{i+1.5} \ra X_{i+2.5} \oplus Y_{i+2} \Big] \circ \smp{0 \\ \beta_{i+1}'}, \]
and further, by anti-commutativity of the outer square of 
Diagram~\ref{octahedron_2}, as
\[ \smp{\alpha_{i+2}'' & \\ & \id} \circ \smp{ 0 \\ \id } \circ (0 \; \beta_{i+1}'') \circ  \smp{0 \\ \beta_{i+1}'} = \smp{0 \\ \beta_{i+1}}. \]
Next we look at the components of $(\dagger)$ ending in $X_{i+3}$. That is
\begin{align*}
& (\alpha_{i+2}'' \; 0) \circ \Big[ X_{i+2} \oplus Y_{i+1} \ra H_{i+1.5} \ra X_{i+2.5} \oplus Y_{i+2} \Big] \\
= & \alpha_{i+2}'' \circ \Big[ X_{i+2} \oplus Y_{i+1} \ra H_{i+1.5} \ra X_{i+2.5} \Big].
\end{align*}
By commutativity of the lower squares of Diagrams~\ref{octahedron_1} 
(for $i=0$) and \ref{octahedron_3} (for $i>1$) this is
\[ \alpha_{i+2}'' \circ (- \alpha_{i+2}' \; 0) = ( - \alpha_{i+2} \; 0 ). \]
Thus we have now shown that the composition $(\dagger)$ is of the form 
$\smp{-\alpha_{i+2} & \\ \varphi_{i+2} & \beta_{i+1}}$ as claimed.

This iteration works for $i = 1, \ldots, n-4$ (for $i = n-4$ we set 
$X_{n-0.5} := X_n$ and $\alpha_{n-1}' := \alpha_{n-1}$ in 
Diagram~\ref{octahedron_3}). With these (and similar) identifications 
Diagram~\ref{octahedron_2} can still be constructed for $i = n-3$. 
Its upper horizontal triangle is
\[ \xymatrix@R=.5cm{
H_{n-1.5} \ar[r] & X_{n-0.5} \oplus Y_{n-1} \ar@{=}[d] \ar[rrr]^{\smp{ - \Sigma_3^{n-3} \alpha_n^2 \cdots \alpha_n^{n-1} & 0 \\ \varphi_{n-0.5} & \beta_{n-1}' }} &&& \Sigma_3^{n-2} X_1 \oplus Y_{n-0.5} \ar@{=}[d] \ar[r] & \Sigma_3 H_{n-1.5} \\
& X_n \oplus Y_{n-1} \ar[rrr]_{\smp{ - \alpha_n & 0 \\ \varphi_n & \beta_{n-1}}} &&& \Sigma_3^{n-2} X_1 \oplus Y_n
} \]
This is the final triangle of the target diagram (Diagram~\ref{target_diagram}).

It remains to check that the composition
\[ \Sigma_3^{n-2} X_1 \oplus Y_n \ra \Sigma_3 H_{n-1.5} \ra \Sigma_3^2 H_{n-2.5} \ra \cdots \ra \Sigma_3^{n-3} H_{2.5} \ra \Sigma_3^{n-2} X_2 \oplus \Sigma_3^{n-2} Y_1 \]
is $\smp{ - \Sigma_3^{n-2} \alpha_1 & 0 \\ \Sigma_3^{n-2} \varphi_1 & \beta_n}$. By commutativity of the right squares of Diagrams~\ref{octahedron_2} and \ref{octahedron_1} (for various $i$) we iteratedly obtain
\begin{align*}
= &  \Big[ \Sigma_3^{n-2} X_1 \oplus \Sigma_3 Y_{n-1.5} \ra \Sigma_3 H_{n-1.5} \ra \cdots \ra \Sigma_3^{n-3} H_{2.5} \ra \Sigma_3^{n-2} X_2 \oplus \Sigma_3^{n-2} Y_1 \Big] \circ \smp{ \id & \\ & \beta_n^{n-1} } \\
= &  \Big[ \Sigma_3^{n-2} X_1 \oplus \Sigma_3 Y_{n-1.5} \ra \Sigma_3^2 H_{n-2.5} \cdots \ra \Sigma_3^{n-3} H_{2.5} \ra \Sigma_3^{n-2} X_2 \oplus \Sigma_3^{n-2} Y_1 \Big] \circ \smp{ \id & \\ & \beta_n^{n-1} } \\
= &  \Big[ \Sigma_3^{n-2} X_1 \oplus \Sigma_3^2 Y_{n-2.5} \ra \Sigma_3^2 H_{n-2.5} \cdots \ra \Sigma_3^{n-3} H_{2.5} \ra \Sigma_3^{n-2} X_2 \oplus \Sigma_3^{n-2} Y_1 \Big] \circ \smp{ \id & \\ & \Sigma_3 \beta_n^{n-2} \beta_n^{n-1} } \\
& \qquad \vdots \\
= & \Big[ \Sigma_3^{n-2} X_1 \oplus \Sigma_3^{n-3} Y_{2.5} \ra \Sigma_3^{n-3} H_{2.5} \ra \Sigma_3^{n-2} X_2 \oplus \Sigma_3^{n-2} Y_1 \Big] \circ \smp{ \id & \\ & \Sigma_3^{n-4} \beta_n^3 \cdots \beta_n^{n-1} }
\intertext{and by commutativity of the right square of Diagram~\ref{octahedron_1} this is}
= & \smp{ - \Sigma_3^{n-2} \alpha_1 & \\ \Sigma_3^{n-2} \varphi_1 & \Sigma_3^{n-3} \beta_n^2} \circ \smp{ \id & \\ & \Sigma_3^{n-4} \beta_n^3 \cdots \beta_n^{n-1} } \\
= & \smp{ - \Sigma_3^{n-2} \alpha_1 & \\ \Sigma_3^{n-2} \varphi_1 & \beta_n}
\end{align*}
This completes the verification of axiom (F4).

Axiom (F3) follows from (F4) immediately.

To see that axiom (F2) holds first note that the $\foa$ is clearly closed under
shifts by $\pm 2n$. Hence it suffices to show the ``only if''-part of the 
claim. This follows from (F4) for $Y_i = 0$.
\end{proof}

\section{Calabi-Yau Properties}
\label{s:calabi-yau-properties}
\subsection{Serre functor and Calabi-Yau categories}
Let $\ka$ be a field and $\Fo$  a $\ka$-category. For a self-equivalence $\Se$
of $\Fo$ and $M\in\rmd{\Fo}$ we denote by $\Se M$ the twisted module
$M\circ \Se^{-1}$.
Let us consider the $\Fo$-bimodules $D\Fo$ and $\Se \Fo$ which are defined
as follows:
\[
D\Fo(x,y):= \Hom_\ka(\Fo(y,x),\ka) \text{ and }
\Se\Fo(x,y):=\Fo(\Se^{-1}x,y) \text{ for all } x,y\in\Fo.
\]
We say that $\Se$ is a {\em Serre-functor} if $\Se\Fo\cong D\Fo$
as $\Fo$-bimodules.

Suppose that $\Tr$ is a triangulated category with suspension $\Sig$ such that
the $\ka$-linear category $\Tr$ admits a Serre functor $\Se$ as above. Then, as
shown in~\cite{BonKap89} and~\cite[Appendix]{Bocklandt08}, $\Se$ becomes a
canonically a triangle functor
$\Tr\ra\Tr$. We say that $\Tr$ is $n$-{\em Calabi-Yau} if
$\Se$ is isomorphic to $\Sig^n$ as a triangle functor.

\subsection{Serre functor for stable module categories}
Let $\Fo$ be a $\ka$-category with a Serre functor $\Se$ as above, and suppose
that the category $\rmd{\Fo}$ of finitely presented functors $\Fo\ra\rmd{\ka}$
is abelian. Then it is automatically a
Frobenius category and $-\otimes_\Fo D\Fo \cong \Se$ is a self-equivalence
of $\rmd{\Fo}$. In fact, $\Se$ is a Nakayama
functor \ie we have a functorial homomorphism
\[
\psi_Y\df D\Hom_\Fo(Y,?)\ra \Hom_\Fo(?,\Se Y)
\]
which is an isomorphism for $Y\in\rmd{\Fo}$ projective.
It follows then from the lemma below that the triangle functor
\[
\Ome_{\smd{\Fo}}\circ \Se
\]
is a Serre functor for the stable category $\smd{\Fo}$ which is naturally
a triangulated category~\cite{Heller68a, happ87a}
with suspension functor $\Ome_{\smd{\Fo}}^{-1}$.

\begin{lem}\label{lem:SF}
Let  $\cE$ be a Hom-finite Frobenius category and denote by $\Ome^{-1}$
the suspension functor for the stable category $\sE$.
Let $\cP\subset \cE$ be the full subcategory of
projective-injectives. Suppose that $\nu\df\cP\xra{\sim}\cP$
is a Serre functor for $\cP$ such that $D\cE(P,X)\cong\cE(X, \nu P)$
holds functorially for $P\in\cP$ and $X\in\cE$.
Then $\nu$ induces a self-equivalence of
$\sE$ which we denote also by $\nu$ and the composition
$S_\sE:=\Ome\circ\nu$ is a Serre functor for the stable category $\sE$ as a
triangle functor.
\end{lem}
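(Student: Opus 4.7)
The plan is to establish the result in two stages: first extend $\nu$ from $\cP$ to a triangle self-equivalence of $\sE$, and then verify the Serre duality isomorphism $D\sHom_\cE(X,Y)\cong\sHom_\cE(Y,\Ome\nu X)$ naturally on $\sE$.

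For the extension, I will use the Frobenius structure: any $X\in\cE$ admits a conflation $0\to\Ome X\to P\to X\to 0$ with $P\in\cP$, and iteration gives a projective resolution. Applying $\nu$ term-wise to the projective part and passing to the stable category defines $\nu X\in\sE$. The hypothesis $D\cE(P,X)\cong\cE(X,\nu P)$ serves as the crucial compatibility constraint ensuring well-definedness (independence of the choice of resolution and functoriality up to stable isomorphism). One then checks that $\nu$ commutes with the syzygy $\Ome$ on $\sE$, so $\nu$ is a triangle functor, and that applying $\nu$ to the original conflation yields a conflation $0\to\Ome\nu X\to\nu P\to\nu X\to 0$ in $\cE$ that will be used in the next step.

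For the Serre duality, given $X,Y\in\cE$, I start from the conflation $0\to\Ome X\to P\to X\to 0$ and apply $\cE(-,\Ome Y)$. Using $\sHom(X,Y)\cong\Ext^1_\cE(X,\Ome Y)$ writes $\sHom(X,Y)$ as the cokernel of $\cE(P,\Ome Y)\to\cE(\Ome X,\Ome Y)$. Dualizing and substituting $D\cE(P,\Ome Y)\cong\cE(\Ome Y,\nu P)$ from the hypothesis expresses $D\sHom(X,Y)$ as a kernel involving $\cE(\Ome Y,\nu P)$. On the dual side, using the conflation $0\to\Ome\nu X\to\nu P\to\nu X\to 0$ together with a second syzygy step (from applying $\nu$ to $0\to\Ome^2 X\to P'\to\Ome X\to 0$) gives a parallel expression for $\sHom(Y,\Ome\nu X)$ involving $\cE(\Ome Y,\nu P')$ and $\cE(\Ome Y,\nu P)$. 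Matching the two expressions via the naturality of the Hom isomorphism and the identity $\nu\Ome\cong\Ome\nu$ on $\sE$ produces the desired natural isomorphism.

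Finally, since $\Ome$ is the inverse of the suspension in $\sE$ (a triangle equivalence) and $\nu$ commutes with $\Ome$ by the construction of the first stage, the composition $S_\sE=\Ome\circ\nu$ is automatically a triangle functor, and the Serre duality isomorphism is natural in both variables by the functoriality of all ingredients. The main obstacle is the extension step: ensuring that the stable class $\nu X$ is independent of the chosen projective resolution and that morphisms lift consistently, for which the given Hom isomorphism $D\cE(P,X)\cong\cE(X,\nu P)$ is the essential compatibility tool.
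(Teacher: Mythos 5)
Your overall strategy --- extend $\nu$ by letting it act termwise on resolutions by objects of $\cP$, then obtain the duality by dualizing a Hom-complex whose terms all have a projective in the contravariant slot --- is essentially the paper's. The paper packages it via the Keller--Vossieck equivalence $Z^0\df\Hac(\cP)\xra{\sim}\sE$, where $\Hac(\cP)$ is the homotopy category of acyclic complexes with components in $\cP$: the termwise action of $\nu$ on such complexes visibly commutes with the shift, preserves acyclicity (acyclicity is detected by $\cE(Q,-)$ and by $\cE(-,I)$ for $Q,I\in\cP$, and the hypothesis $D\cE(P,X)\cong\cE(X,\nu P)$ interchanges these two tests), and $Z^0$ of the resulting acyclic complex is an honest object of $\cE$. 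This settles in one stroke the two points you flag as delicate: the well-definedness of $\nu X$ as an \emph{object} (in a mere exact category the cokernel of $\nu P_1\ra\nu P_0$ need not exist, so you really need complete, two-sided resolutions rather than projective ones) and the compatibility $\nu\Ome\cong\Ome\nu$.

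The genuine gap is in your duality computation. From the conflation $0\ra\Ome X\ra P\ra X\ra 0$ one gets
\[
\sHom_\cE(X,Y)\cong\operatorname{coker}\bigl(\cE(P,\Ome Y)\ra\cE(\Ome X,\Ome Y)\bigr),
\]
hence $D\sHom_\cE(X,Y)\cong\ker\bigl(D\cE(\Ome X,\Ome Y)\ra D\cE(P,\Ome Y)\bigr)$. The hypothesis converts the target into $\cE(\Ome Y,\nu P)$, but it says nothing about the source $D\cE(\Ome X,\Ome Y)$, since $\Ome X$ does not lie in $\cP$; this term has no counterpart in your expression for $\sHom_\cE(Y,\Ome\nu X)$, so the two sides cannot be ``matched'' as written. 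The repair is to present $\sHom_\cE(X,Y)$ as the middle cohomology of a three-term complex all of whose entries are of the form $\cE(P_i,-)$ with $P_i\in\cP$ --- take one more step of the resolution, or equivalently compute in $\Hac(\cP)$ as the paper does, using $\Hac(\cP)(P^\bul,Q^\bul)\cong H^0\cE(P^\bul[-1],Z^0Q^\bul)$. Then $D$ commutes with taking cohomology by Hom-finiteness, the hypothesis applies to every term, and one lands on $H^0\cE(Z^0Q^\bul,\nu P^\bul[-1])\cong\Hac(\cP)(Q^\bul,\nu P^\bul[-1])$, which is exactly $\sHom_\cE(Y,\Ome\nu X)$.
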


\begin{proof}
Let $\Cac(\cP)$ be the dg-category of acyclic complexes of projectives from
$\cP$, and $\Hac(\cP)=H^0(\Cac(\cP))$ its homotopy category. It follows
from~\cite{kevo88} that we have an equivalence of triangulated categories
\[
Z^0\df\Hac(\cP)\ra \sE
\]
which sends a complex $P^\bullet=(\cdots\ra P^{-1}\ra P^0\ra P^1\ra \cdots)$
to $Z^0P^\bullet$. Since $\cE$ has enough projectives,
a complex $P^\bullet$ in $\cE$ is acyclic if and only if $\cE(Q,P^\bullet)$
is acyclic for any $Q\in\cP$. Dually, it is acyclic if and only if
$\cE(P^\bullet,I)$ is acyclic for any $I\in\cP$. It follows, that
$\nu$ defines a selfequivalence of $\Hac(\cP)$. We denote also by $\nu$
the induced selfequivalence of $\sE$. Now, using the isomorphism
$D\cE(P,X)\cong\cE(X, \nu P)$, it is easy to verify
for $P^\bullet,Q^\bullet\in\Hac(\cP)$ the following isomorphisms
\begin{multline*}
D\Hac(\cP)(P^\bullet,Q^\bullet) \cong DH^0\cE(P^\bullet[-1],Z^0(Q^\bullet))\\
\cong H^0\cE(Z^0(Q^\bullet),\nu P^\bullet[-1])
\cong \Hac(\cP)(Q^\bullet,\nu P^\bullet[-1]),
\end{multline*}
which imply our claim.
\end{proof}

\subsection{Construction of Calabi-Yau categories from $n$-angulated
categories} \label{subsect.CY}
Consider now an $n$-angulated $\ka$-category  $\Fo=(\Fo,\Sig_n,\foa)$
which has a Serre-functor $\Se$ with $\Se=\Sig_n^d$ as $\ka$-linear functors.
Then the triangulated category
$\smd{\Fo}$ is $(nd-1)$-Calabi-Yau. In fact, by  Lemma~\ref{lem:class1}
in $\smd{\Fo}$, we have an isomorphism of triangle functors
$\Ome^{-n}_{\smd{\Fo}} \cong \Sig_n$.
Finally, by Lemma~\ref{lem:SF}, $\smd{\Fo}$ has as a Serre functor as a
triangulated category:
\[
\Ome_{\smd{\Fo}}\circ\Se\cong \Ome_{\smd{\Fo}}\Sig_n^d\cong\Ome_{\smd{\Fo}}^{1-nd}
\]
with all isomorphisms being isomorphisms of triangle functors.

\section{Examples}
\label{s:examples}

\subsection{$n$-angulated subcategories of derived categories}

Let $\Lambda$ be a finite dimensional algebra of global dimension at 
most $n-2$. We denote the Serre functor of ${\rm D^b}(\rmd \Lambda)$ by $\Se$.

\begin{defn}[see \cite{IyamaOppermann09a}]
The algebra $\Lambda$ is called $(n-2)$-representation finite if the module 
category $\rmd \Lambda$ has an $(n-2)$-cluster tilting object. (Cluster 
tilting subcategories of abelian categories are defined in the same way as 
cluster tilting subcategories for triangulated categories in 
Definition~\ref{def.ct}.)
\end{defn}

In this case, by \cite[Theorem~1.23]{Iyama08}, the subcategory
\[ \mathcal{U} = \add \{ \Se^i \Lambda [-(n-2)i] \mid i \in \mathbb{Z} \} \]
of the bounded derived category is $(n-2)$-cluster tilting. Moreover, 
by~\cite[Corollary~3.7]{IyamaOppermann09}, $\Se \mathcal{U} = \mathcal{U}$.

Thus, by Theorem~\ref{theorem.standard}, $\mathcal{U}$ is an $n$-angulated 
category for any $(n-2)$-representation finite algebra $\Lambda$.

\subsection{$n$-angulated categories in Calabi-Yau categories}
\label{s:n-ang-cat-in-CY-cat}

Let $\Tr$ be a triangulated $d$-Calabi-Yau-category with Serre
functor $\Se$. By our standard construction, an $(n-2)$-cluster tilting
subcategory $\Fo\subset\Tr$ which
is closed under $\Sig^{n-2}$ has a structure
of a $n$-angulated category with suspension $\Sig_n:=\Sig^{n-2}$.
By the remarks in~\ref{rems:std} we conclude that $d=d'(n-2)$ for some
integer $d'\in\Zet$, and in particular $\Sig_n^{d'}=\Se$.
By Remark~\ref{subsect.CY} we conclude that $\smd{\Fo}$ is 
$(d+2d'-1)$-Calabi-Yau.

By arguments in \cite{0705.3903} for $d=2$ we have that the endomorphism ring 
of a $2$-cluster tilting object $T$ is selfinjective if and only if 
$\Se T \in \add(T)$. Thus we see the following.

\begin{prp}
Let $\Lambda$ be a selfinjective $2$-Calabi-Yau tilted algebra. Then 
$\rpr \Lambda$ is a $4$-angulated category, with the Nakayama functor $\nu$ 
as suspension.
\end{prp}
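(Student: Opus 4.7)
The plan is to realize $\rpr\Lambda$ as a $2$-cluster tilting subcategory of a $2$-Calabi-Yau triangulated category and then invoke Theorem~\ref{theorem.standard} with $n=4$.

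First I would unpack the hypotheses. A $2$-Calabi-Yau tilted algebra is by definition an endomorphism ring $\Lambda=\End_{\Tr}(T)$ of a $2$-cluster tilting object $T$ in some $2$-Calabi-Yau triangulated category $(\Tr,\Sig_3)$, whose Serre functor $\Se$ satisfies $\Se\cong\Sig_3^2$ as triangle functors. The subcategory $\Fo:=\add T\subset\Tr$ is then $2$-cluster tilting, and Yoneda provides an equivalence $\Hom_{\Tr}(T,-)\colon \Fo\xra{\sim}\rpr\Lambda$ sending $T$ to $\Lambda$. By the result of \cite{0705.3903} quoted just above the statement, selfinjectivity of $\Lambda$ is equivalent to $\Se T\in\add T$, i.e.\ to $\Fo$ being stable under $\Se\cong\Sig_3^2=\Sig_3^{n-2}$ for $n=4$. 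Hence Theorem~\ref{theorem.standard} applies and equips $\Fo$ with a $4$-angulation whose suspension is the restriction of $\Sig_3^2\cong\Se$ to $\Fo$; transporting this structure along the Yoneda equivalence yields a canonical $4$-angulation on $\rpr\Lambda$.

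The remaining and, I expect, only subtle point is to identify the transported suspension with the Nakayama functor $\nu$ of $\Lambda$. This comes down to comparing the two Serre-type dualities
\[
D\,\Hom_{\Tr}(X,Y)\cong \Hom_{\Tr}(Y,\Se X),\qquad
D\,\Hom_\Lambda(P,Q)\cong \Hom_\Lambda(Q,\nu P),
\]
and observing that the Yoneda equivalence intertwines the two Hom bifunctors (and sends $T$ to $\Lambda$). A standard Morita-type uniqueness argument for Serre/Nakayama functors then produces a canonical natural isomorphism between the image of $\Se|_{\add T}$ and $\nu$ on $\rpr\Lambda$, which transports the $4$-angulation to one with suspension $\nu$, completing the proof.
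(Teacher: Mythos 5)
Your proposal is correct and follows essentially the same route as the paper: identify $\rpr\Lambda$ with $\add T$ for a $2$-cluster tilting object $T$ in a $2$-Calabi-Yau category, use Ringel's criterion to translate selfinjectivity into $\Se T\in\add T$, i.e.\ stability of $\add T$ under $\Sig_3^{2}\cong\Se$, and then apply Theorem~\ref{theorem.standard} with $n=4$. Your extra care in identifying the transported suspension $\Se|_{\add T}$ with the Nakayama functor $\nu$ via the two Serre-type dualities is exactly the (implicit) content of the paper's preceding discussion of Nakayama functors, so nothing is missing.
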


There are many examples for this proposition which are closely related to 
cluster algebras:
\begin{itemize}
\item
By definition, a selfinjective cluster tilted algebra $A$ is a selfinjective
algebra of the form $A=\End_{\cC_H}(T)$ for $T$ a cluster tilting object
$T$ in a cluster category $\cC_H$ associated to an hereditary algebra $H$.
It is well-known, that $A$ is selfinjective if and only if $T=\Sig^2 T$.
Thus, by our standard construction, the category $\add(T)\cong\rpr(A)$ is
a 4-angulated category with suspension $\Sig_4=\Sig^2$ being the Serre functor.
Note that the selfinjective cluster tilted algebras were classified
by Ringel~\cite{0705.3903}.

\item
Quite similarly, one may study selfinjective cluster quasi-tilted
algebras of the form $A=\End_{\cC_\Xx}(T)$ for a cluster tilting
object $T$ in the cluster category $\cC_\Xx$ of a weighted
projective line $\Xx$ in the sense of Geigle and
Lenzing~\cite{GeLe87}. A similar analysis as in~\cite{0705.3903}
shows, that this can occur only if $\Xx$ is of one of the following
tubular types: $(6,3,2)$, $(4,4,2)$ and $(2,2,2,2)$. Note that since
in $\cC_\Xx$ we have $\tau=\Sig$, for the tubular type $(3,3,3)$ no
cluster tilting object $T$ can fulfill $\Sig^2 T=T$ because
$\tau^3=\id$. In contrast, by the same token in type $(2,2,2,2)$ all
cluster tilting objects fulfill $\Sig^2T=T$. In this case, one
obtains 4 families of weakly symmetric  cluster quasi-tilted
algebras which are best described using quivers with potential, cf.\
figure~\ref{fig:fig1}.

\begin{figure}
\begin{tabular}{ll}
$Q_1=\vcenter{\xymatrix@+1.3pc{
x_0\ar@{.}[d] & \ar[l]|(.4){a_{00}}\ar[ld]|(.35){a_{10}} y_0
              & \ar[l]|(.4){b_{00}}\ar[ld]|(.35){b_{10}} z_0
              & \ar[l]|(.4){c_{00}}\ar[ld]|(.35){c_{10}} x_0\ar@{.}[d]\\
x_1           & \ar[l]|(.4){a_{11}}\ar[lu]|(.35){a_{01}} y_1
              & \ar[l]|(.4){b_{11}}\ar[lu]|(.35){b_{01}} z_1
              & \ar[l]|(.4){c_{11}}\ar[lu]|(.35){c_{01}} x_1\\
}}$
&
\begin{minipage}[c]{5.8cm}
\begin{multline*}
W_1  = \lam a_{00}b_{00}c_{00} - a_{01}b_{10}c_{00}\\
           +a_{11}b_{10}c_{01} - a_{10}b_{00}c_{01}\\
           +a_{01}b_{11}c_{10} - a_{00}b_{01}c_{10}\\
           +a_{10}b_{01}c_{11} - a_{11}b_{11}c_{11},
\end{multline*}
\end{minipage}\\
$Q_2=\vcenter{\xymatrix@C+0.2pc{
w_0\ar@{.}[dd] & & \ar[ld]|{b_0}y_0 & & \ar[ld]|{d_0} w_0\ar@{.}[dd]\\
    &\ar[lu]|{a_0}\ar[ld]|{a_1} x & & \ar[lu]|{c_0}\ar[ld]|{c_1} z&\\
w_1 & & \ar[lu]|{b_1}y_1 & & \ar[lu]|{d_1} w_1
}}$
&
\begin{minipage}[c]{5.8cm}
\begin{multline*}
W_2 =\lam a_0b_0c_0d_0 - a_0b_1c_1d_0\\
   + a_1b_1c_1d_1 - a_1b_0c_0d_1,
\end{multline*}
\end{minipage}\\
$Q_3=\vcenter{\xymatrix@C+0.3pc{
\ar@{.}[dd]&\ar[ld]|{a} x & & \ar[ld]|{c_0}z_0 &\ar@{.}[dd]  \\
w&&\ar[lu]|{b}\ar[ll]|{\tilde{a}} y &\ar[l]|{c_1}z_1 & \ar[lu]|{d_0}\ar[l]|{d_1} \ar[ld]|{d_2}w\\
 & &&\ar[lu]|{c_2}z_2 &
}}$
&
\begin{minipage}[c]{5.8cm}
\begin{multline*}
W_3= \lam \tilde{a}c_0d_0 - abc_0d_0\\
    + abc_1d_1-\tilde{a}c_1d_1+\tilde{a}c_2d_2,
\end{multline*}
\end{minipage}
\end{tabular}

\begin{tabular}{ll}
$Q_4=\vcenter{\xymatrix@C+1.5pc@R-0.2pc{
 &\ar[ldd]|{a_0} y_0 & & \\
\ar@{.}[dd] &\ar[ld]|{a_1} y_1 & &\ar@{.}[dd]\\
x&\ar[l]|{a_2} y_2&\ar[luu]|{b_0}\ar[lu]|{b_1}\ar[l]|{b_2}\ar[ld]|{b_3} z& \ar@/_1pc/[l]|{c_0}\ar@/^1pc/[l]|{c_1} x\\
&\ar[lu]|{a_3} y_3 & &
}}$
&
\begin{minipage}[c]{5.8cm}
\begin{multline*}
W_4 = \lam a_0b_0c_0 + a_1b_1c_0 - a_2b_2c_0\\
      - a_0b_0c_1 - a_1b_1c_1 + a_3b_3c_1.
\end{multline*}
\end{minipage}
\end{tabular}
\caption{Quivers with potentials for $4$ families of weakly
symmetric  cluster quasi-tilted algebras} \label{fig:fig1}
\end{figure}
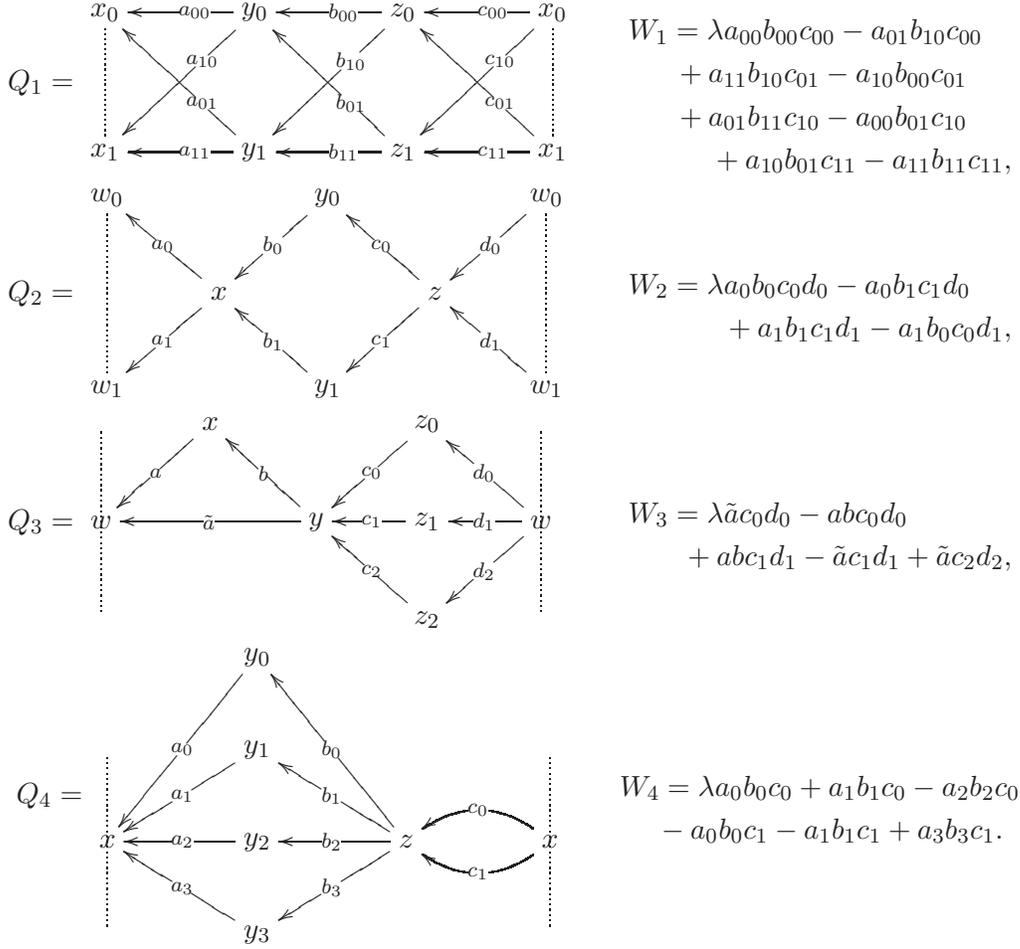
Here, the parameter $\lam$ runs through $\ka\setminus\{0,1\}$ and
the left and right borders (marked by a dotted line) of each
quiver have to be identified. Recall that for weight type $(2,2,2,2)$
the cluster category $\cC_\Xx$ depends on a parameter. In the corresponding
$4$-angulated categories with four isomorphism classes of indecomposable 
objects,
the suspension $\Sig_4$ as well as the Serre functor are
the identity (which is a bit untypical).

\item
Let $\Lam$ be the preprojective algebra of Dynkin type $\dA,\dD,\dE$.
It is well-known that the category of projective modules $\rpr{\Lam}$
is a 1-Calabi-Yau triangulated category. Thus, the stable module category
$\smd{\Lam}$ is a 2-Calabi-Yau triangulated category. In fact, it is
a generalized cluster category in the sense of Amiot associated to a
stable Auslander algebra of the corresponding Dynkin type~\cite{Amiot09}.
With the description
in~\cite{GeLeSc05b} it is easy to check that for type $\dA_n$,  the standard
cluster tilting object $T_n$ associated to the linear orientation of $\dA_n$
fulfills $\Ome_\Lam^{-2} T_n\cong T_n$.
Thus, $A_n:=\End_\Lam(T_n)$ is selfinjective,
and $\rpr{A_n}\cong\add(T_n)$ is a 4-angulated category. Note, that for
$n\geq 6$ the algebras $A_n$ are not cluster (quasi-)tilted.
\end{itemize}

For $d > 2$ it is in general insufficient to check selfinjectivity
of the cluster tilted algebra in order to find out if the cluster
tilting object is closed under the Serre functor.

If $\Lambda$ is $(n-2)$-representation finite then we still know (by
\cite[Corollary~3.7]{IyamaOppermann09}) that the canonical $(n-2)$-cluster
tilting object $\pi \Lambda$ in the corresponding $(n-2)$-Amiot
cluster category $\mathcal{C}$ is closed under the Serre functor.
Thus we have that $\add(\pi \Lambda) = \rpr \End_{\mathcal{C}}(\pi
\Lambda)$ is an $n$-angulated category.

These $n$-angulated categories (in particular for the case of higher
Auslander algebras of linearly oriented $A_n$) are studied in
\cite[Sections~5 and 6]{OppTho10}.

\subsection{Link to algebraic geometry and string theory} 
\label{ss:links-alg-geom-string-theory}
The theory of non-commutative Donaldson-Thomas invariants developed by 
Kontsevich-Soibelman
in \cite{KontsevichSoibelman08, KontsevichSoibelman10} allows one to associate
refined DT-invariants to suitable $3$-Calabi-Yau categories.
In particular, for each finite quiver $Q$ endowed with a
potential $W$ which is a {\em finite} linear combination of cycles
of length at least three, this theory provides us with a refined 
DT-invariant $A_{Q,W}$, which is an element of a quantized power series algebra
$\Lambda_Q$ depending only on $Q$.
In string theory, the element $A_{Q,W}$ is also 
called~\cite{CecottiNeitzkeVafa10}
the {\em Kontsevich-Soibelman half-monodromy}
and its square the {\em Kontsevich-Soibelman monodromy}. The
determination of its spectrum (in representations of $\Lambda_Q$) is of
considerable interest. An easy case is the one where $A_{Q,W}$ is
of finite order. Many examples where this occurs are related to
$4$-angulated categories. Indeed, let us suppose that $(Q,W)$ satisfies suitable
finiteness conditions and in particular that its Jacobi algebra
is finite-dimensional.  Then one can show (see \cite{Keller10b}) that 
the Kontsevich-Soibelman half-monodromy (more precisely: its adjoint action
composed with the antipodal map) is `categorified' by the
suspension functor $\Sigma$ of the generalized cluster-category $\cc_{Q,W}$
associated with $(Q,W)$ in \cite{Amiot09}. Thus, the Kontsevich-Soibelman 
monodromy is
categorified by $\Sigma^2$. If the cluster category $\Tr=\cc_{Q,W}$ contains
a $2$-cluster tilting subcategory $\Fo$ invariant under $\Sigma^2$, then
the action of $\Sigma^2$ on the set of isoclasses of indecomposables
in $\Fo$ defines a faithful permutation representation of the cyclic group 
generated by the Kontsevich-Soibelman monodromy. In particular, in this case, 
the order of the Kontsevich-Soibelman monodromy is finite. The case where
$\Fo$ is stable under $\Sigma^2$ is precisely the one where $\Fo$ inherits
a natural structure of $4$-angulated category with $4$-suspension functor
given by $\Sigma^2$. Thus, the action of the $4$-suspension functor in
the $4$-angulated category $\Fo$ categorifies the Kontsevich-Soibelman
monodromy.

As a particularly nice example (already mentioned in
Section~\ref{s:n-ang-cat-in-CY-cat}), let us consider the quiver $Q$
below endowed with the potential $W$ which is the signed sum over all
small triangles, where adjacent triangles have different signs.
\[
\begin{xy} 0;<0.5pt,0pt>:<0pt,-0.5pt>::
(138,0) *+{0} ="0",
(91,76) *+{1} ="1",
(183,76) *+{2} ="2",
(45,153) *+{3} ="3",
(138,153) *+{4} ="4",
(229,153) *+{5} ="5",
(0,229) *+{6} ="6",
(91,229) *+{7} ="7",
(183,229) *+{8} ="8",
(274,229) *+{9} ="9",
"1", {\ar"0"},
"0", {\ar"2"},
"2", {\ar"1"},
"3", {\ar"1"},
"1", {\ar"4"},
"4", {\ar"2"},
"2", {\ar"5"},
"4", {\ar"3"},
"6", {\ar"3"},
"3", {\ar"7"},
"5", {\ar"4"},
"7", {\ar"4"},
"4", {\ar"8"},
"8", {\ar"5"},
"5", {\ar"9"},
"7", {\ar"6"},
"8", {\ar"7"},
"9", {\ar"8"},
\end{xy}
\]
As shown in \cite{GeLeSc05b},
the category of projective modules over the associated Jacobi algebra
is equivalent to a $2$-cluster tilting subcategory $\Fo$ of the stable
module category over the preprojective algebra of type $\dA_5$.
By \cite{Amiot09}, this category can also be described as the generalized
cluster category associated with the above pair $(Q,W)$. The suspension
functor of this category is of order $6$ and its square leaves
$\Fo$ invariant. The associated permutation of the indecomposable
projectives corresponds to the rotation by $120$ degrees of the
above quiver. We conclude that the $4$-angulated category $\Fo$
has a $4$-suspension functor of order~$3$ and that this is also
the order of the Kontsevich-Soibelman monodromy
associated with $(Q,W)$.

We refer to \cite{Keller10b} for
a more detailed survey of the ideas sketched above and for
many more examples.


\begin{thebibliography}{10}

\bibitem{Amiot09}
Claire Amiot, \emph{Cluster categories for algebras of global dimension $2$ and
  quivers with potential}, Annales de l'institut {F}ourier \textbf{59} (2009),
  no.~6, 2525--2590.

\bibitem{BuanMarshReinekeReitenTodorov06}
Aslak Bakke~Buan, Robert~J. Marsh, Markus Reineke, Idun Reiten, and Gordana
  Todorov, \emph{Tilting theory and cluster combinatorics}, Advances in
  Mathematics \textbf{204 (2)} (2006), 572--618.

\bibitem{AusSm80} 
M. Auslander, S. Smal\o{}: 
\emph{Preprojective modules over Artin algebras},  
J. Algebra  \textbf{66}  (1980), no. 1, 61--122.

\bibitem{BeilinsonBernsteinDeligne82}
Alexander~A. Beilinson, Joseph Bernstein, and Pierre Deligne, \emph{Analyse et
  topologie sur les espaces singuliers}, Ast{\'e}risque, vol. 100, Soc. Math.
  France, 1982 (French).

\bibitem{Bocklandt08}
Raf Bocklandt, \emph{Graded {C}alabi {Y}au algebras of dimension 3}, J. Pure
  Appl. Algebra \textbf{212} (2008), no.~1, 14--32.

\bibitem{BonKap89}
A.~I. Bondal and M.~M. Kapranov, \emph{Representable functors, {S}erre
  functors, and reconstructions}, Izv. Akad. Nauk SSSR Ser. Mat. \textbf{53}
  (1989), no.~6, 1183--1205, 1337, translation in Math. USSR-Izv. 35 (1990),
  no. 3, 519--541.

\bibitem{Cartan53}
H.~Cartan, \emph{Vari\'et\'es analytiques complexes et cohomologie}, Colloque
  sur les fonctions de plusieurs variables, tenu \`a Bruxelles, 1953, Georges
  Thone, Li\`ege, 1953, pp.~41--55.

\bibitem{CecottiNeitzkeVafa10}
Sergio Cecotti, Andrew Neitzke, and Cumrun Vafa, \emph{{$R$}-{T}wisting and
  $4d/2d$ {C}orrespondences}, arXiv:1006.3435v2 [hep-th].

\bibitem{GaiottoMooreNeitzke10}
Davide Gaiotto, Gregory~W. Moore, and Andrew Neitzke, \emph{Framed {BPS}
  states}, arXiv:1006.0146 [hep-th], 123 pages, 52 figures.

\bibitem{GeLe87}
W.~Geigle and H.~Lenzing, \emph{A class of weighted projective curves arising
  in representation theory of finite-dimensional algebras}, Singularities,
  representation of algebras, and vector bundles ({L}ambrecht, 1985), Lecture
  Notes in Math., vol. 1273, Springer, Berlin, 1987, pp.~265--297.

\bibitem{GeLeSc05b}
Ch. Geiss, B.~Leclerc, and J.~Schr{\"o}er, \emph{Auslander algebras and initial
  seeds for cluster algebras}, J. London Math. Soc. (2) \textbf{75} (2007),
  718--740.

\bibitem{GeissLeclercSchroeer05}
Christof Gei\ss, Bernard Leclerc, and Jan Schr{\"o}er, \emph{Semicanonical
  bases and preprojective algebras}, Ann. Sci. \'Ecole Norm. Sup. (4)
  \textbf{38} (2005), no.~2, 193--253.

\bibitem{happ87a}
D.~Happel, \emph{On the derived category of a finite-dimensional algebra},
  Comment. Math. Helv. \textbf{62} (1987), no.~3, 339--389.

\bibitem{happ88}
\bysame, \emph{Triangulated categories in the representation theory of finite
  dimensional algebras}, Cambridge University Press, 1988, LMS Lecture Note
  Series 119.

\bibitem{Heller68a}
A.~Heller, \emph{Stable homotopy categories}, Bull. Amer. Math. Soc.
  \textbf{74} (1968), 28--63.

\bibitem{Iyama08}
Osamu Iyama, \emph{Cluster tilting for higher {A}uslander algebras},
  arXiv:0809.4897 [math.RT], to appear in Adv.~Math.

\bibitem{Iyama07a}
\bysame, \emph{Higher-dimensional {A}uslander-{R}eiten theory on maximal
  orthogonal subcategories}, Adv. Math. \textbf{210} (2007), no.~1, 22--50.

\bibitem{IyamaOppermann09a}
Osamu Iyama and Steffen Oppermann, \emph{$n$-representation-finite algebras and
  $n$-{APR} tilting}, arXiv:0909.0593 [math.RT], 33 pages, to appear in Trans.
  AMS.

\bibitem{IyamaOppermann09}
\bysame, \emph{Stable categories of higher preprojective algebras},
  arXiv:0912.3412 [math.RT], 37 pages.

\bibitem{IyamaYoshino08}
Osamu Iyama and Yuji Yoshino, \emph{{Mutations in triangulated categories and
  rigid Cohen-Macaulay modules}}, Inv. Math. \textbf{172} (2008), 117--168.

\bibitem{Kell05}
B.~Keller, \emph{On triangulated orbit categories}, Doc. Math. \textbf{10}
  (2005), 551--581 (electronic).

\bibitem{kevo88}
B.~Keller and D.~Vossieck, \emph{Sous les cat\'egories d\'eriv\'ees}, C. R.
  Acad. Sci. Paris S\'er. I Math. \textbf{305} (1987), no.~6, 225--228.

\bibitem{Keller10b}
Bernhard Keller, \emph{Quantum dilogarithm identities and cluster algebras},
32 pages, \href{http://arxiv.org/abs/1102.4148}{arXiv:1102.4148v2 [math.RT]}

\bibitem{KontsevichSoibelman10}
Maxim Kontsevich and Yan Soibelman, \emph{Cohomological {H}all algebra,
  exponential {H}odge structures and motivic {D}onaldson-{T}homas invariants},
  arXiv:1006.2706 [math.AG].

\bibitem{KontsevichSoibelman08}
\bysame, \emph{Stability structures, {D}onaldson-{T}homas invariants and
  cluster transformations}, arXiv:0811.2435 [math.AG].

\bibitem{neeman01}
A.~Neeman, \emph{Triangulated categories}, Annals of Mathematics Studies, vol.
  148, Princeton University Press, Princeton, NJ, 2001.

\bibitem{OppTho10}
S.~Oppermann and H.~Thomas, \emph{Higher dimensional cluster combinatorics and
  representation theory}, 40 pages, 2010.

\bibitem{Puppe62a}
Dieter Puppe, \emph{{On the formal structure of stable homotopy theory.}},
  {Colloq. algebr. Topology, Aarhus 1962, 65-71 (1962).}, 1962.

\bibitem{0705.3903}
C.~M. Ringel, \emph{The self-injective cluster-tilted algebras}, Arch. Math.
  (Basel) \textbf{91} (2008), no.~3, 218--225.

\bibitem{Verdier77}
Jean-Louis Verdier, \emph{Cat{\'e}gories d{\'e}riv{\'e}es, {\'e}tat 0}, SGA
  4.5, Lec. Notes in Math., vol. 569, Springer--Verlag, 1977, pp.~262--308
  (French).

\end{thebibliography}

\newcommand{\SortNoop}[1]{}\def\cprime{$'$} \def\cprime{$'$}
\providecommand{\bysame}{\leavevmode\hbox to3em{\hrulefill}\thinspace}

\end{document}